\newtheorem{proposition}{Proposition}[section]
\newtheorem{lemma}[proposition]{Lemma}
\newtheorem{remark}[proposition]{Remark}
\newtheorem*{remark*}{Remark}
\newtheorem{theorem}[proposition]{Theorem}
\newcommand{\ignore}[1]{}
\newcommand{\stern}[1]{#1_*}
\newcommand{\weakto}{\rightharpoonup}
\newcommand{\weakstarto}{\stackrel{\ast}{\rightharpoonup}}
\newcommand{\Chi}{\mathcal{X}}
\newcommand{\Ha}{\mathcal{H}}
\newcommand{\calD}{\mathcal{D}}
\newcommand{\eps}{\varepsilon}
\newcommand{\one}{\boldsymbol{1}}
\newcommand{\init}{\text{in}}
\newcommand{\R}{\mathbb{R}}
\newcommand{\N}{\mathbb{N}}
\newcommand{\embeds}{\hookrightarrow}
\newcommand{\shift}{S}
\newcommand{\Xspace}{V_2(\Gamma_T)}
\newcommand{\XspaceOm}{V_2(\Omega_T)}
\newcommand{\ssubset}{\subset\mathrel{\mkern-3mu}\subset}
\begin{document}

	\title{A parabolic free boundary problem arising in a model of cell polarization}

\author{A. Logioti
\and
B. Niethammer
\and
M. R\"oger
\and
J. J. L. Vel\'{a}zquez
}
\maketitle

\begin{abstract}
The amplification of an external signal is a key step in direction sensing of biological cells. We consider a simple model for the response to a time-depending signal, which was previously proposed by the last three authors. 
The model consists of a bulk-surface reaction-diffusion model. We prove that in a suitable asymptotic limit the system converges to a bulk-surface parabolic obstacle type problem. For this model and a reduction to a nonlocal surface equation we show an $L^1$-contraction property and, in the case of time-constant signals, the stability of stationary states.

\medskip%
\noindent%
{\bf AMS Classification.} 35R35, 35K86, 92C37, 35R01
\\[2ex]\noindent%
{\bf Keywords. }PDEs on surfaces, obstacle-type problem, Stability of steady states
\end{abstract}

\tableofcontents

\section{Introduction}
\medspace
Cell polarization in response to some external chemical stimulus contributes significantly in numerous biological processes, such as the migration, development,
and organization of eukaryotic cells \cite{RaEd17}. Roughly speaking, the process of cell polarity is correlated to the reorganization of several
chemicals within a cell and on a cell membrane.  Typically polarization is achieved by the combination of an internal pattern forming system, a response to an external
signal that imposes some directional preference to the pattern, and the amplification of small concentration differences \cite{SkLN05}.

A key step in the polarization process is the direction sensing \cite{ChOt16}, where chemical gradients are detected and amplified. This step proceeds by the transduction of a signal by receptors on the plasma membrane and its adaption by intracellular signaling cascades, which involve the activation and deactivation of specific proteins and the translation of possibly shallow gradients in the outer signal to large amplitude intracellular gradients in protein distributions. Once such polarity of the cell in form a of a spatial asymmetry in chemical concentrations has been established, changes in cell shape and the movement of the cell in the surrounding environment can be initiated.

Polarization is in many instances a dynamic, time-dependent process and a tight regulation of the response to changing environmental conditions is key for many biological functions. One prominent and well-studied example is the chemotaxis of the social amoeba Dictyostelium that migrate to the source of waves of chemoattractant, which exposes the cell to a pulsatile gradient \cite{NIIS14,SYEB14}.

\medskip
Several mathematical models of varying complexity have been suggested to analyze the spatial and temporal processes associated with cell polarization. One of the most popular models is the local excitation, global inhibition (LEGI) mechanism which was suggested in the seminal paper about cell polarization \cite{Mein99}, see also \cite{PaDe99,LeIg02}, and is often part of more comprehensive models \cite{ChOt16}.
\medskip

We focus on a minimal model for the amplification step that has been proposed in \cite{NiRV20}. The significance of the suggested model stems from the fact
that in a suitable parameter regime an asymptotic reduction leads to a generalized obstacle-type problem that allows for a clear and mathematically
tractable characterization of polarized states. In  \cite{NiRV20} we have analyzed stationary states and the onset of polarization.
The present paper continues this analysis by considering the time-dependent problem.
\medskip

The model proposed in \cite{NiRV20} consists of a system of PDEs, motivated by the GTPase cycle model presented in \cite{RaeRoe12,RaeRoe14}.
We consider a protein that can be in an active or an inactive state, where the inactive protein moreover can be bound to the cell membrane or be in a cytosolic state, i.e.~contained in the cells interior. We denote the surface concentration of the active and incative form by $u$ and $v$, respectively, and the volume concentration of the inactive cytosolic state by $w$.
The model has only a few ingredients. It accounts for lateral diffusion on the cell membrane, for diffusion inside the cell,
for activation and deactivation processes on the cell membrane and for attachment to and detachment from the cell membrane. One contribution to the
activation depends on a concentration $c$ of a protein that characterizes an external signal (possibly after a first processing step). This concentration in general may vary with space and time.

Most of these processes are modeled by linear kinetic laws, except for parts of the activation and deactivation processes that need the catalyzation by enzymes and are described by simple Michaelis-Menten type rate laws, see \cite{NiRV20} for more details on the model derivation.
\medskip

To give a mathematical formulation, we represent the cell and its outer cell membrane by a domain $\Omega \subset \mathbb R^3$ and its boundary $\Gamma:=\partial \Omega$.
Moreover we fix a time interval $(0,T)$ of observation, a signal concentration $c:\Gamma\times (0,T)\to\R$, and request that $u,v:\Gamma\times (0,T)$ and $w:\Omega\times (0,T)$ solve the following coupled system of bulk and surface partial differential equations
\begin{align}
	\partial_t u &=\Delta_\Gamma u+\Big(a_1+\frac{a_2u}{a_3+u}+c\Big)v-\frac{a_4 u}{1+u}
	&\text{on $\Gamma \times (0,T)$}\,, \label{eq1}\\
	\partial_t v &=\Delta_\Gamma v-\Big(a_1+\frac{a_2u}{a_3+u}+c\Big)v+\frac{a_4 u}{1+u}-a_5v+a_6w &\text{on } \Gamma \times (0,T)\,,\label{eq2}\\
	\partial_tw&=D\Delta w  &\text{in }\Omega \times (0,T)\,, \label{eq3}\\
	-D\frac{\partial w}{\partial \nu}&=-a_5v+a_6w &\text{on } \Gamma \times (0,T)\,.\label{eq4}
\end{align}
Here $\Delta_\Gamma u$ and $\Delta_\Gamma v$ denote the Laplace-Beltrami
operator on the surface $\Gamma$ and $a_1,\dots,a_6$ are nonnegative constants while $D$ denotes the quotient of the cytosolic diffusion and the lateral membrane diffusion constants, which typically is very large.
\medskip

We complement the system with initial conditions:
\begin{equation}
	u(\cdot,0)=u_\init\,, \quad v(\cdot,0)=v_\init \text{\quad on $\Gamma$}\,, \quad w(\cdot,0)=w_\init \text{\quad in $\Omega$}\,,\label{eq5}
\end{equation}
where $u_\init,v_\init:\Gamma \to [0,\infty)$ and $w_\init:\Omega \to [0,\infty)$ are given nonnegative data.

The system \eqref{eq1}-\eqref{eq4} contains two parts. On the one hand, we have a reaction-diffusion system on the membrane for the variables $u$ and $v$,
with a $w$-dependent source term. On the other hand, there is a diffusion equation for $w$ in the interior of the cell with a nonlinear Robin-type boundary
condition that depends on $u$ and $v$. Solutions of \eqref{eq1}-\eqref{eq5} satisfy the mass conservation property
\begin{equation}
	\int_{\Omega} w(\cdot,t) \,dx+\int_{\Gamma} \big(u(\cdot,t)+v(\cdot,t) \big) \,dS=\int_{\Omega} w_\init \,dx +\int_{\Gamma} \big( u_\init+v_\init \big) \,dS\label{eq6}
\end{equation}
for all $t \in (0,T)$.

In addition to \eqref{eq1}-\eqref{eq4} we will study a reduced system that is obtained in the limit of infinite cytosolic diffusivity, which is motivated by the fact that cytosolic diffusion within the cell is by a factor of hundred larger than the lateral diffusion on the membrane \cite{KhHW00}. In this limit the cytosolic concentration becomes spatially constant and $w=w(t)$ is determined by the total mass conservation, i.e.
\begin{equation}
	 \vert \Omega \vert  w(t)=m-\int_{\Gamma} \big (u(\cdot,t)+ v(\cdot,t) \big )\,dS\,,
	 \label{eq10}
\end{equation}
where $m$ is the total amount of protein.
The reduction for $D=\infty$ leads to a nonlocal reaction-diffusion system on $\Gamma\times (0,T)$, given by \eqref{eq1}, \eqref{eq2} and \eqref{eq10},
complemented by initial conditions for $u$ and $v$. This reduction can be viewed as a kind of shadow system. Such systems have been analyzed intensively in the case of two-variable reaction-diffusion systems in open domains \cite{Keen78,HaSa89,LiNi09}, and in the context of obstacle problems in \cite{Rodr02}.

Under the assumption that the reaction rates $a_4, a_5$ and $a_6$, the diffusion coefficient $D$ and the total mass of proteins are of order $\varepsilon^{-1}$,
we will prove that solutions converge in the \emph{large reaction rate limit} $\eps \to 0$ to solutions of certain reduced systems.
First, we will investigate the limit of infinite cytosolic diffusivity. Taking then the limit $\varepsilon \to 0$, yields the following parabolic obstacle-type problem
\begin{align}
	&\partial_t u -\Delta u  =-a_4(1-g)\xi + \alpha g
	&\mbox{ on } \Gamma \times (0,T)\,, \label{ob1,1}
	\\
	&u\geq 0\,, \quad  u\xi = u \, ,\quad
	0\leq \xi\leq 1  &\mbox{  on } \Gamma \times (0,T)\,,\label{ob1,2}\\
	&u(\cdot,0) = u_0 & \mbox{ on } \Gamma \,, \label{ob1,3}
\end{align}
where $u_0$ is the limit of suitably rescaled versions of $u_\init$ (cf. \eqref{eq15b}), the function  $g :\Gamma \times (0,T) \to (0, 1)$ is given by
\begin{equation}\label{gdef}
	g(x,t)=\frac{c(x,t)}{c(x,t)+a_5}\,,
\end{equation}
and  $\alpha:(0,T)\to\R$ only depends on time and is determined by a solvability condition for \eqref{ob1,1}, see \eqref{alpha}. This function $\alpha$ plays the role a Lagrange multiplier associated to the mass conservation property
\begin{equation*}
  \int_\Gamma u(\cdot,t)\,dS = \int_\Gamma u_0\,dS \quad
  \text{ for all }t\in (0,T)\,,
\end{equation*}
that is satisfied in the limit.

In the case $D<\infty$ equation \eqref{ob1,1} changes and we obtain the system
\begin{align}
  \partial_t u&=\Delta u-a_4(1-g)\xi+a_6 g w \quad &\mbox{ on } \Gamma \times (0,T)\,, \label{ob2,1}\\
  0&=\Delta w \quad\text{ in } \Omega_T,  \quad& \text{ on } \Gamma \times (0,T) \,, \label{ob2,2a}
  \\
  D\frac{\partial w}{\partial n} &= a_4(1-g)\xi-a_6gw  \quad&\text{ on } \Gamma \times (0,T) \,, \label{ob2,2}
  \\
	u &\geq 0\,, \quad  u\xi = u \, ,\quad
	0\leq \xi\leq 1  \quad &\mbox{  on } \Gamma \times (0,T)\,,\label{ob2,3}\\
	u(\cdot,0) &= u_0 &\mbox{ on } \Gamma \,. \label{ob2,4}
\end{align}
The analogy to $D=\infty$ is even more apparent if one expresses $w$ as a nonlocal operator of $u$.
A particularly convenient form is presented in Proposition \ref{P.equivalence}.

Stationary solutions of model \eqref{eq1}-\eqref{eq4} and the corresponding scaling limits  have already been studied in \cite{NiRV20}.
In particular, in addition to well-posedness, the onset of polarization is studied in \cite{NiRV20} for sufficiently small
(rescaled) mass of protein. The goal of the present paper is to complement this analysis. The main contributions are a rigorous  justification
of the asymptotic reduction, the well-posedness of the evolutionary obstacle-type problem that we obtain in the limit, an $L^1$-contraction property of solutions, and the global
stability of steady states.

\bigskip
Parabolic obstacle problems appear in various applications and have have been studied intensively over the past decades \cite{Frie88}. For example, the one-phase Stefan problem can be written as a parabolic obstacle problem by a suitable  transformation that was first proposed by Duvaut \cite{Duva73}.
In the context of fluid flows in porous media the Baiocchi transform \cite{Baio72} also leads to an obstacle problem.
Obstacle problems belong to a class of free boundary problems that can be formulated as variational inequalities, i.e.~inequalities for bilinear functionals which are satisfied for functions $u$ and test functions in a space satisfying inequalities of the form $u\geq\psi$.
Alternatively, under some regularity assumptions it is possible to reformulate the same class of free boundary problems as PDEs in which an unknown function $\xi$ satisfies an inequality almost everywhere in the set in which the PDEs are solved. Both formulations can be found for example in \cite{KiSt00,Rodr87}.
The equivalence between both approaches can be seen using the so-called Stampacchia Lemma \cite[Section 5:3, Theorem 5:4.3]{Rodr87}.
In this paper we will only use the second approach. Therefore, in addition to the unknown $u$ we must determine also an auxiliary function $\xi\in [0,1]$ such that $\xi=1$ in $\{u>0\}$.

\medskip
The connection of our limit problems with the parabolic obstacle problem is best seen for the reduced model \eqref{ob1,1}-\eqref{ob1,2}. In Remark \ref{2.Rxi} we derive the following characterization of solutions,
\begin{equation}
 H(u):= \partial_t u - \Delta u + \big(a_4(1-g) - \alpha g\big) = \big(a_4(1-g) - \alpha g\big)_+\Chi_{\{u=0\}},\qquad u\geq 0,
 \label{eq:betaform}
\end{equation}
with $\alpha=\alpha(t)$ given as a nonlocal function of $u$, more precisely
\begin{equation*}
  \alpha(t)  =\frac{a_4\int_{\{u(\cdot,t)>0\}}\big(1-g(\cdot,t)\big)\,dS}{\int_{\{u(\cdot,t)>0\}}g(\cdot,t)\,dS},
\end{equation*}
see \eqref{alpha} and \eqref{alphaplus}.
In the formulation \eqref{eq:betaform} the problem corresponds to the classical parabolic obstacle model, where $a_4(1-g) - \alpha g$ is replaced by some given function $f$ independent of $u$. The problem  \eqref{eq:betaform} can be written as
\begin{equation*}
 uH(u) = 0,\qquad H(u) \geq 0,\qquad u\geq 0,
\end{equation*}
and can be expressed as a variational inequality, see for example \cite[Section II.9.1]{Lion69}.

One of the features of the free boundary problems considered in this paper is the presence of some terms in the equations that depend in a non-local way on the solution $u$ itself.
In the case of problem \eqref{ob1,1}-\eqref{ob1,3}, the non-locality is introduced by the dependence of $\alpha$ in \eqref{eq:betaform} on the positivity set  $\{u>0\}$.
For the bulk-surface problem \eqref{ob2,1}-\eqref{ob2,4}
the non-local dependence takes place through the function $w$ which solves the elliptic problem \eqref{ob2,2a}, \eqref{ob2,2}.
We remark that free boundary problems containing dependences on the positivity set of the solution itself (i.e.~$\{
u>0\}$) have been considered in \cite{Rodr02}.

Several of the technical difficulties that we need to address in this paper are due to the fact that the function $\alpha$ changes in a discontinuous manner if the positivity set $\{u(\cdot,t)>0\}$ changes discontinuously in time.
However, to prove that $\{u(\cdot,t)>0\}$ changes continuously in time is not an easy task and we expect that jumps of this set are possible in some situations.
We will address the continuity properties of $\{u(\cdot,t)>0\}$ and $\alpha$ in future work, but remark here that possible jumps of the functions $t\mapsto \{u(\cdot,t)>0\}$ and $t\mapsto\alpha(t)$ are the main reason for several of the most technical points of this paper.

\medskip
Compared to \cite{NiRV20} the main novelty of this paper is to introduce some monotonicity formulas which allow us to prove uniqueness of solutions and also uniqueness and stability of steady states of the problems \eqref{ob1,1}-\eqref{ob1,3} and \eqref{ob2,1}-\eqref{ob2,3}.

Uniqueness of the steady states associated to the problem \eqref{ob1,1}-\eqref{ob1,3} has been proved in \cite{NiRV20} using a completely different approach.
Similar uniqueness results have been obtained in \cite{NiRV20} for the stationary states of \eqref{ob2,1}-\eqref{ob2,3} in the particular case in which the domain $\Omega$ is a ball.
The monotonicity formulas introduced in this paper (cf. Sections \ref{3} and \ref{4}) imply that the evolution semigroup associated to the problems \eqref{ob1,1}-\eqref{ob1,3} and \eqref{ob2,1}-\eqref{ob2,3} is contractive in the $L^{1}$ norm.
It is worth to remark that the existence of these monotonicity formulas rely on a delicate balance of the terms $-a_4(1-g)\xi$ and $\alpha g,\ a_5 gw$ in \eqref{ob1,1}, \eqref{ob2,1}.
The term $-a_{4}(1-g)\xi$ has a stabilizing effect, which is similar to the analogous term arising in the study of the reformulation of the one-phase Stefan problem due to Duvaut \cite{Duva73}.
On the other hand the terms $\alpha g,\ a_5 gw$ in \eqref{ob1,1}, \eqref{ob2,1} depend on functions determined as a non-local functional of $u$ (namely $\alpha$ and $w$ respectively).
These terms have a destabilizing effect on the solutions of the problems \eqref{ob1,1}-\eqref{ob1,3} and \eqref{ob2,1}-\eqref{ob2,3}, but some cancellations between the contributions of both terms in the derivative of the $L^{1}$ norm of the difference of two solutions of these problems yield an overall stabilizing effect.

\bigskip
The plan of this paper is the following.
Section \ref{2} is devoted to establishing the convergence of solutions in the fast reaction limit to the limiting obstacle-type problems.
In Section \ref{2.1} we will first investigate the case of infinite cytosolic diffusion $D=\infty$, introduce a suitable rescaled system \eqref{eq13}-\eqref{eq15b} and prove the convergence to \eqref{ob1,1}-\eqref{ob1,3} (cf. Theorem \ref{T.conv1}).
In Section \ref{2.2} we consider the analogous problem for finite cytosolic diffusion coefficients $D$.
We derive in a scaling limit analogous to the case $D=\infty$ the generalized obstacle-type problem \eqref{ob2,1}-\eqref{ob2,4} in Theorem \ref{T.conv2}.
Section \ref{3} focuses on the case $D=\infty$.
In Section \ref{3.1} we justify an $L^1$-contraction property and the uniqueness of solutions of problem \eqref{ob1,1}-\eqref{ob1,3} (Theorem \ref{T.uniqueness1}) while in Section \ref{3.2} we will show the global stability
of the steady states (Theorem \ref{T.stability1}).
In Section \ref{4} we study the reduced model for finite cytosolic diffusion $D< \infty$. We prove an $L^1$-contraction property and the uniqueness of solutions of problem \eqref{ob2,1}-\eqref{ob2,4} in
Section \ref{4.1}, see Theorem \ref{T.uniqueness2}. We also include a monotonicity property and a uniqueness result for solutions of the stationary problem in Theorem \ref{thm:monotonicity}.
This improves the corresponding result from \cite{NiRV20} that was only shown for $\Omega=B(0,1)$ there. Along the lines of Section \ref{3.2}, we will further show in
Section \ref{4.2}, Theorem \ref{T.stability2}, that steady states are globally stable.

\subsection{Notation and Assumptions}

\textbf{Notations:} For a set $\Omega \subset \mathbb R^3$ we denote by $\vert \Omega \vert =\mathcal L^3(\Omega)$ the Lebesgue measure. For a surface $\Gamma \subset \mathbb R^3$ we
denote by $\vert \Gamma \vert=\mathcal H^2(\Gamma)$ its area (i.e. the 2-dimensional Hausdorff measure) and by $\int_{\Gamma}\cdot\; dS$ the corresponding surface integral.

For the sake of convenience, $\Omega_T$ and $\Gamma_T$ stand for $\Omega \times (0,T)$ and  $\Gamma \times (0,T)$ respectively.
For the Laplace-Beltrami operator on $\Gamma$ we just write $\Delta$ instead of $\Delta_\Gamma$ if there is no reason for confusion.

We denote the usual Sobolev spaces by $W^{k,p}(U)$ and the parabolic Sobolev spaces by $W^{k,k/2}_p(U_T)$, where $U=\Omega$ or $U=\Gamma$, $k\in\N_0$, $1\leq p\leq\infty$. The Hölder and parabolic Hölder spaces are denoted by $C^\alpha(U)$ and $C^{\alpha,\alpha/2}(U_T)$, respectively, for $0<\alpha<1$.
The weak parabolic solution spaces are denotes by $V_2(U_T):=L^2(0,T;H^1(U))\cap H^1(0,T;H^1(U)^*)$.

\medskip
\textbf{Assumptions:} Let $\Omega \subset \mathbb R^3$ be an open, bounded, connected set with $C^3-$regular boundary
$\Gamma=\partial \Omega$. Assume $a_1,a_2 \ge 0$, $a_3, a_4,a_5,a_6 > 0$ and $D \ge 1$ and that $c :\Gamma_T \to \mathbb R_{+}$ is smooth and that there exists $c_0 > 0$ with
\begin{equation}
  c(x,t) \ge c_0 >0 \quad \text{ for all }(x,t) \in \Gamma_T\,. \label{assum}
\end{equation}

\section{The fast reaction limit}
\label{2}
\subsection{Convergence to a parabolic obstacle-type problem for $D= \infty$ \label{2.1}}

In this section we consider the case of infinite cytosolic diffusion coefficient, that is we consider solutions to \eqref{eq1}, \eqref{eq2} and \eqref{eq5} together with \eqref{eq10}.
It follows from \cite{HaRo18} that for given $m > 0$ and for  nonnegative data $u_\init,v_\init\in L^2(\Gamma)$ with $\int_{\Gamma} (u_\init+v_\init)\,dS \leq m$ there exists
a nonnegative solution  $(u,v,w)$ with
$u,v\in \Xspace$ and $w\in W^{1,\infty}(0,T)$. In fact, although the analysis in \cite{HaRo18} does not consider nonconstant $c$ all arguments easily carry over to the present case.

\medskip
Our goal in this section is to consider a suitable scaling limit of the system \eqref{eq1}, \eqref{eq2}, \eqref{eq5} and \eqref{eq10}.
More precisely, for small $\eps>0$  we introduce the following rescalings
\begin{equation}
	a_4 = \frac{\hat a_4}\eps,\quad a_5 = \frac{\hat a_5}\eps,\quad a_6 = \frac{\hat a_6}\eps, \quad
	c=\frac{\hat c}{\eps} \quad \text{and} \quad m = \frac{\hat m}\eps \label{eq12}
\end{equation}
with $\hat a_4,\hat a_5,\hat a_6, \hat c$ and $\hat m$ being positive and of order one. We denote the corresponding solutions by $u_{\eps}, v_{\eps}$ and $w_{\eps}$ and let $U_\eps := \eps u_\eps$.
Dropping the hats again, we can  rewrite \eqref{eq1}, \eqref{eq2}, \eqref{eq5} and \eqref{eq10} as
\begin{align}
	\partial_t U_\eps &=\Delta U_\eps+\Big(\eps a_1+\frac{\eps a_2U_\eps}{\eps a_3+U_\eps}+c\Big)v_\eps-
	\frac{a_4 U_\eps}{\eps+U_\eps } \qquad  \qquad \text{ on } \Gamma_T\,,\label{eq13}\\
	\eps\partial_t v_\eps &=\eps\Delta v_\eps-\Big(\eps a_1+\frac{\eps a_2U_\eps}{\eps a_3+U_\eps}+
	c\Big)v_\eps+\frac{a_4 U_\eps}{\eps+U_\eps}-a_5v_\eps+a_6w_\eps \qquad \text{ on } \Gamma_T\,,\label{eq14}\\
	\eps \vert \Omega \vert w_\eps(t)&= m-\int_{\Gamma} ( U_\eps(x,t)+\eps v_\eps(x,t) )\,dS \qquad \qquad  \mbox{ for a.a. }  t \in (0,T)\,.\label{eq15}
\end{align}
For given nonnegative, smooth functions $U^\eps_0,v^\eps_0:\Gamma\to\R$ with $\int_\Gamma \big(U^\eps_0 +\eps v^\eps_0\big) \leq m$ we prescribe the initial conditions
\begin{equation}
	U_\eps(\cdot,0)=U^\eps_0\,, \quad v_\eps(\cdot,0)=v^\eps_0 \quad\text{ on }\Gamma\,.
	\label{eq5eps}
\end{equation}
In order to obtain a nontrivial limit, we assume for the initial data that
\begin{equation}\label{eq15b}
	U^\eps_0 \to u_0 \quad \mbox{ in } L^2(\Gamma) \text{ as } \eps \to 0\,,\quad
  \sup_{\eps> 0}\Big[\int_\Gamma |v^\eps_0|^2\,dS + \frac{1}{\eps}\Big(m - \int_\Gamma \big(U^\eps_0 +\eps v^\eps_0\big)\,dS\Big)\Big]\leq C\,
\end{equation}
for some $u_0\in L^2(\Gamma)$ with $\int_{\Gamma} u_0\,dS=m$ and some $C>0$.

\medskip
We first prove some uniform estimates.

\begin{theorem}\label{T.bounds1}
	For any  nonnegative solution $(U_\eps, v_\eps, w_\eps)$ of \eqref{eq13}-\eqref{eq5eps} we have
	\begin{equation}
		\| U_\eps \|_{\Xspace}+
		\| v_\eps \|_{L^\infty(0,T;L^2(\Gamma))}+\| w_\eps \|_{L^\infty(0,T)}
		\leq C\,, \label{eq55a}
	\end{equation}
	where here and in the following $C$ denotes a constant that depends on the data of the problem but not on $\eps$.
\end{theorem}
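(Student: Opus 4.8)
The plan is to derive uniform-in-$\eps$ a priori estimates by testing the equations with suitable functions and exploiting the mass conservation structure. The main tools will be an energy estimate obtained by testing the $U_\eps$-equation, control of $v_\eps$ and $w_\eps$ through the coupling, and the nonlocal relation \eqref{eq15} that ties the cytosolic concentration $w_\eps$ to the surface masses. The key observation is that the singular reaction term $-a_4 U_\eps/(\eps+U_\eps)$ is bounded by $a_4$ pointwise, which controls the worst nonlinearity, while the remaining reaction terms are either bounded or nonnegative and amenable to absorption.

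\medskip
First I would establish the $L^\infty$-bound on $w_\eps$. From \eqref{eq15} and the nonnegativity of $U_\eps,v_\eps$ together with the constraint $\int_\Gamma(U_0^\eps+\eps v_0^\eps)\le m$, one has $0\le \eps|\Omega|w_\eps(t)\le m$; but since $m=\hat m/\eps$ is of order $\eps^{-1}$ after dropping the hats this only yields $w_\eps\le |\Omega|^{-1}m$, which is $O(1)$ in the rescaled variables — so $\|w_\eps\|_{L^\infty(0,T)}\le C$ follows directly. Next I would test \eqref{eq13} with $U_\eps$ itself. The diffusion term produces $\|\nabla U_\eps\|_{L^2(\Gamma)}^2$, the singular term is controlled using $0\le a_4 U_\eps/(\eps+U_\eps)\le a_4$, and the source term $(\eps a_1+\eps a_2 U_\eps/(\eps a_3+U_\eps)+c)v_\eps U_\eps$ is estimated by noting the prefactor is bounded (by $\eps a_1+\eps a_2+\|c\|_\infty$) and then applying Young's inequality to split $v_\eps U_\eps$. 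This gives an estimate of the form $\tfrac{d}{dt}\|U_\eps\|_{L^2(\Gamma)}^2+\|\nabla U_\eps\|_{L^2(\Gamma)}^2\le C(1+\|U_\eps\|_{L^2}^2+\|v_\eps\|_{L^2}^2)$, which requires an independent bound on $\|v_\eps\|_{L^2(\Gamma)}$ to close.

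\medskip
To control $v_\eps$, I would test \eqref{eq14} with $v_\eps$. The singular-diffusion structure $\eps\partial_t v_\eps-\eps\Delta v_\eps$ produces $\tfrac{\eps}{2}\tfrac{d}{dt}\|v_\eps\|_{L^2}^2+\eps\|\nabla v_\eps\|_{L^2}^2$, while the crucial dissipative term $-a_5 v_\eps\cdot v_\eps=-a_5\|v_\eps\|_{L^2}^2$ provides damping (recall $a_5>0$). The remaining terms — the transferred reaction source $a_4 U_\eps/(\eps+U_\eps)$ (bounded by $a_4$), the attachment term $a_6 w_\eps$ (bounded via the previous step), and the nonnegative detachment term $-(\ldots)v_\eps\cdot v_\eps\le 0$ — can be absorbed into the $a_5$-term using Young's inequality with a small parameter. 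The natural strategy is then to add the two tested equations with appropriate weights so that the cross terms coming from the exchange between \eqref{eq13} and \eqref{eq14} cancel (they appear with opposite signs), producing a single Gr\"onwall-type differential inequality for the combined energy $\|U_\eps\|_{L^2(\Gamma)}^2+\eps\|v_\eps\|_{L^2(\Gamma)}^2$ plus dissipation $\|\nabla U_\eps\|_{L^2}^2+a_5\|v_\eps\|_{L^2}^2$. Integrating in time and invoking the uniform bound on the initial data from \eqref{eq15b} then yields the $\Xspace$-bound on $U_\eps$ and the $L^\infty(0,T;L^2(\Gamma))$-bound on $v_\eps$; the $H^1(0,T;H^1(\Gamma)^*)$ part of the $V_2$-norm for $U_\eps$ comes by bounding $\partial_t U_\eps$ in $H^1(\Gamma)^*$ directly from \eqref{eq13} using the estimates already obtained.

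\medskip
The hard part will be handling the time-derivative term in the $v_\eps$ estimate, since $\partial_t v_\eps$ carries a factor $\eps$ and the damping $-a_5\|v_\eps\|_{L^2}^2$ is $O(1)$: one must be careful that the $a_5$-dissipation genuinely dominates the $O(1)$ source terms uniformly in $\eps$, so that $\|v_\eps\|_{L^\infty(0,T;L^2(\Gamma))}$ stays bounded independently of $\eps$ despite the degenerating time-derivative. A delicate point is that a naive Gr\"onwall constant could blow up as $\eps\to0$ if the $v_\eps$-bound is not obtained \emph{algebraically} (i.e.\ by balancing $a_5\|v_\eps\|^2$ against the bounded right-hand side pointwise in time) rather than via an evolutionary inequality whose constant involves $\eps^{-1}$. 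I expect the correct approach is to first extract an $L^2(\Gamma_T)$ or pointwise-in-time algebraic bound on $v_\eps$ from the $v_\eps$-equation using the coercivity of $a_5$, and only then feed it into the $U_\eps$-energy estimate, thereby decoupling the two bounds and avoiding any $\eps$-dependence in the final constant $C$.
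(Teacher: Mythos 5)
There is a genuine gap, and it sits at the very first step, on which everything else in your argument rests. Your claimed ``direct'' $L^\infty$ bound on $w_\eps$ from mass conservation is false. In the rescaled relation \eqref{eq15} the hats have already been dropped, so $m$ there is an order-one quantity, and nonnegativity of $U_\eps,v_\eps$ gives only $\eps|\Omega|\,w_\eps(t)\le m$, i.e. $w_\eps(t)\le m/(\eps|\Omega|)=O(\eps^{-1})$, not $O(1)$. Your sentence mixes the original and rescaled variables: it is precisely because the unrescaled $w$ is expected to be of size $\eps^{-1}$ that the rescaled identity carries the factor $\eps$ in front of $w_\eps$; mass conservation alone cannot exclude that an order-one fraction of the total mass $m$ sits in the cytosol, which would make $w_\eps$ of size $\eps^{-1}$. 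Since your $v_\eps$-estimate absorbs the term $a_6 w_\eps v_\eps$ using this unavailable bound, and your $U_\eps$-estimate in turn needs the $v_\eps$-bound, the whole chain collapses. Ruling out a large cytosolic fraction is a \emph{dynamical} statement, not a kinematic one, and it is exactly the nontrivial content of the theorem.

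The repair is the coupling your proposal is missing. Differentiating \eqref{eq15} (equivalently, using \eqref{eq13}--\eqref{eq15}) gives the ODE $\eps|\Omega|\frac{d}{dt}w_\eps=\int_\Gamma(a_5v_\eps-a_6w_\eps)\,dS$, see \eqref{eq:15ode}. Testing \eqref{eq14} with $a_5v_\eps$ and this ODE with $a_6w_\eps$ and \emph{adding} makes the exchange terms combine into $-\int_\Gamma\big(a_5^2v_\eps^2-2a_5a_6v_\eps w_\eps+a_6^2w_\eps^2\big)\,dS$, a complete square with a sign; together with $c\ge c_0$ and Young's inequality this yields $\eps\frac{d}{dt}\big(\int_\Gamma v_\eps^2\,dS+w_\eps^2\big)\le C-\big(\int_\Gamma v_\eps^2\,dS+w_\eps^2\big)$. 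This differential inequality, combined with the bound on the initial value $w_\eps(0)=(\eps|\Omega|)^{-1}\big(m-\int_\Gamma(U_0^\eps+\eps v_0^\eps)\,dS\big)$ --- which is exactly what the second part of hypothesis \eqref{eq15b} provides and which your proposal never invokes --- gives uniform bounds for $v_\eps$ and $w_\eps$ by ODE comparison: whenever the quantity exceeds $C$ it is decreasing, so no $\eps$-dependent Gr\"onwall constant appears. This also settles the worry in your last paragraph; note that a purely ``algebraic pointwise-in-time'' bound cannot be extracted from \eqref{eq14} alone, since $\eps\partial_t v_\eps$ cannot simply be discarded. A secondary inaccuracy: the cancellation you expect when adding the $U_\eps$- and $v_\eps$-energy identities does not occur, because the exchange terms are tested against different functions ($U_\eps$ in one equation, $v_\eps$ in the other); it is also unnecessary, since once $v_\eps,w_\eps$ are bounded the right-hand side of \eqref{eq13} is bounded in $L^2(\Gamma_T)$ (the singular term is $\le a_4$ pointwise, as you note) and linear parabolic theory directly gives the $\Xspace$ bound for $U_\eps$.
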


\begin{proof}
By virtue of \eqref{eq14} we compute
\begin{align}
	\frac{d}{dt} \int_{\Gamma} \frac{\eps a_5 v^2_\eps}{2}\,dS
	&= - \int_\Gamma  \eps a_5 {\vert \nabla v_\eps \vert }^2\,dS -\int_{\Gamma} a_5 \Big(\eps a_1+\frac{\eps a_2U_\eps}{\eps a_3+U_\eps}+c\Big)v_\eps^2\,dS
  \notag\\
	&\qquad
	+\int_{\Gamma} \frac{a_4 a_5 U_\eps v_\eps}{\eps+U_\eps}\,dS
  -\int_{\Gamma} \big( ( a_5 v_\eps)^2- a_5 a_6 v_\eps w_\eps \big )\,dS\,.
  \label{eq2,1}
\end{align}
We observe that \eqref{eq13}-\eqref{eq15} imply that
\begin{equation}
	\eps |\Omega|\frac{d}{dt}w_\eps = \int_\Gamma (a_5v_\eps -a_6 w_\eps)\,dS\,,\qquad
  \eps |\Omega| w_\eps(0) = m - \int_\Gamma \big(U^\eps_0 +\eps v^\eps_0\big)\,dS
	\label{eq:15ode}
\end{equation}
and obtain
\begin{equation}
	\eps |\Omega| a_6 \frac{d}{dt}\frac{1}{2}w_\eps^2 =
	a_5a_6\int_\Gamma v_\eps w_\eps \,dS - a_6^2 |\Gamma| w_\eps^2.
  \label{eq2,2}
\end{equation}
Taking the sum of \eqref{eq2,1} and \eqref{eq2,2} and using $c \geq c_0 >0 $ yields
\begin{align*}
	&\frac{d}{dt} \Big(\int_{\Gamma} \frac{\eps a_5 v^2_\eps}{2}\,dS + \eps |\Omega| a_6 \frac{1}{2}w_\eps^2\Big)
	+\int_\Gamma  \eps a_5 {\vert \nabla v_\eps \vert }^2\,dS +c_0 \int_{\Gamma} a_5 v_\eps^2\,dS \\
	&\qquad
	\leq \int_\Gamma a_4a_5 v_\eps\,dS  -\int_\Gamma \big( a_5^2 v_\eps^2- 2a_5 a_6 v_\eps w_\eps +a_6^2 w_\eps^2\big )\,dS\\
  &\qquad \leq \int_\Gamma \frac{c_0a_5}{4}v_\eps^2 + \frac{1}{c_0}a_4^2a_5  + \Big(\frac{1}{\delta}-1\Big)a_5^2v_\eps^2 - (1-\delta)a_6^2w_\eps^2\,dS\,,
\end{align*}
where we have used Young's inequality and where $\delta>0$ is arbitrary.

We next choose $\delta<1$ sufficiently close to one such that $(\frac{1}{\delta}-1\big)a_5<\frac{1}{4}c_0$ and obtain
\begin{equation*}
  \eps \frac{d}{dt} \Big(\int_\Gamma v_\eps^2\,dS + w_\eps^2\Big)
  \leq C - \Big(\int_\Gamma v_\eps^2\,dS + w_\eps^2\Big).
\end{equation*}
Using \eqref{eq15b} we deduce
\begin{equation}
	\int_{\Gamma} v_\eps^2(\cdot,t)\,dS + w_\eps^2(t) \leq C\quad\text{ for all }0\leq t\leq T.
	\label{eq:L2bound_v}
\end{equation}
This implies the required bounds for $v_\eps,w_\eps$.

Furthermore, by these estimates the reaction-terms on the right-hand side of \eqref{eq13} are uniformly bounded in $L^2(\Gamma_T)$.
Parabolic $L^2$ theory, see \cite[Section 7.1]{Evan10}, and \eqref{eq15b} imply the uniform bound for $U_\eps$, which finishes the proof of \eqref{eq55a}.
\end{proof}

\begin{theorem}\label{T.conv1}
Suppose that $\{(U_\eps ,v_\eps , w_\eps )\}_{\eps >0}$ is a family of nonnegative solutions of \eqref{eq13}-\eqref{eq5eps} and assume \eqref{eq15b}.
Then there exist a subsequence $\eps \to 0$, a nonnegative function $u \in \Xspace$ and a measurable function $\xi$ such that
\begin{align}
	U_{\eps} &\weakto u \quad\text{ in }\Xspace \,,
	\label{eq:T.conv1-1}\\
	\frac{U_{\eps}}{U_{\eps}+{\eps}}&\overset * \rightharpoonup \xi \quad \text{ weakly* in }  L^{\infty}(\Gamma_T) \label{eq:T.conv1-2}
\end{align}
as $\eps \to 0$.
Moreover, $u\in W^{2,1}_p(\Gamma\times (\delta,T))$ for any $\delta>0$, $1\leq p<\infty$, with
\begin{equation}
	\|u\|_{W^{2,1}_p(\Gamma\times (\delta,T))} \leq C(p,\delta,T),
	\label{eq2regu}
\end{equation}
and  $\int_{\Gamma} u(\cdot,t)\,dS = m$ holds for  all $t\in [0,T]$.

Finally, there exists a nonnegative function $\alpha\in L^\infty(0,T)$
such that \eqref{ob1,1}-\eqref{ob1,3} are satisfied.
\end{theorem}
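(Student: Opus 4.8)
The plan is to pass to the limit $\eps\to0$ in the weak formulations of \eqref{eq13} and \eqref{eq14}, using the $\eps$-uniform bounds of Theorem \ref{T.bounds1}. First I would extract all relevant limits. Boundedness of $U_\eps$ in $\Xspace$ gives a subsequence with $U_\eps\weakto u$ in $\Xspace$; since $\Xspace\embeds C([0,T];L^2(\Gamma))$ and, by the Aubin--Lions lemma, $\Xspace$ embeds compactly into $L^2(\Gamma_T)$, I may assume in addition that $U_\eps\to u$ strongly in $L^2(\Gamma_T)$ and that the traces satisfy $U_\eps(\cdot,0)\weakto u(\cdot,0)$ in $L^2(\Gamma)$. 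As $\frac{U_\eps}{U_\eps+\eps}\in[0,1]$, a further subsequence converges weakly* in $L^\infty(\Gamma_T)$ to some $\xi$ with $0\le\xi\le1$; similarly $v_\eps\weakto v$ in $L^2(\Gamma_T)$ (from the $L^\infty(0,T;L^2(\Gamma))$ bound) and $w_\eps\weakstarto w$ in $L^\infty(0,T)$, where $w=w(t)$ is spatially constant and $w\ge0$. I then set $\alpha:=a_6w$, a nonnegative function in $L^\infty(0,T)$.

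The obstacle relations \eqref{ob1,2} follow quickly. Nonnegativity $u\ge0$ and $0\le\xi\le1$ are preserved under the (weak*) limits. For $u\xi=u$ I would use the algebraic identity $U_\eps\cdot\frac{U_\eps}{U_\eps+\eps}=U_\eps-\eps\frac{U_\eps}{U_\eps+\eps}$: its right-hand side converges to $u$ in $L^2(\Gamma_T)$ because the correction is $O(\eps)$ uniformly, while its left-hand side converges to $u\xi$ since $U_\eps\to u$ strongly in $L^2(\Gamma_T)$ and $\frac{U_\eps}{U_\eps+\eps}\weakstarto\xi$. Mass conservation is read off directly from \eqref{eq15}: because $w_\eps$ and $\int_\Gamma v_\eps\,dS$ stay bounded, $\int_\Gamma U_\eps(\cdot,t)\,dS=m-O(\eps)$ uniformly in $t$, and passing to the limit (using a.e.\ convergence of $U_\eps(\cdot,t)$ in $L^2(\Gamma)$ and continuity of $t\mapsto\int_\Gamma u(\cdot,t)\,dS$) gives $\int_\Gamma u(\cdot,t)\,dS=m$ for all $t$. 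The initial condition \eqref{ob1,3} follows from $U_\eps(\cdot,0)=U^\eps_0\to u_0$ together with the weak convergence of traces.

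The core step is the identification of the limiting reaction terms. In the weak form of \eqref{eq14}, tested against smooth functions compactly supported in time, the singular contributions vanish: $\eps\partial_t v_\eps\to0$ because $v_\eps$ is $L^2$-bounded, and $\eps\Delta v_\eps\to0$ because the energy estimate underlying Theorem \ref{T.bounds1} yields $\sqrt\eps\,\nabla v_\eps$ bounded in $L^2(\Gamma_T)$, whence $\eps\nabla v_\eps\to0$. Moreover $\eps a_1\to0$ and $\frac{\eps a_2U_\eps}{\eps a_3+U_\eps}\le\eps a_2\to0$ uniformly, so the limit of \eqref{eq14} is the pointwise algebraic relation $(c+a_5)v=a_4\xi+\alpha$ a.e.\ in $\Gamma_T$. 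Substituting $v=\frac{a_4\xi+\alpha}{c+a_5}$ into the weak limit of \eqref{eq13}, in which $cv_\eps\weakto cv$ and $\frac{a_4U_\eps}{\eps+U_\eps}\weakstarto a_4\xi$, and recalling $g=\frac{c}{c+a_5}$ from \eqref{gdef}, gives
\[
  \partial_t u-\Delta u=cv-a_4\xi=\alpha g-a_4(1-g)\xi,
\]
which is precisely \eqref{ob1,1}.

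Finally, the right-hand side $-a_4(1-g)\xi+\alpha g$ belongs to $L^\infty(\Gamma_T)$, so interior-in-time parabolic $L^p$ estimates yield $u\in W^{2,1}_p(\Gamma\times(\delta,T))$ together with the bound \eqref{eq2regu} for every $\delta>0$ and $1\le p<\infty$; the restriction to $t\ge\delta$ is forced by the merely $L^2$ regularity of $u_0$. Integrating \eqref{ob1,1} over $\Gamma$, using $\frac{d}{dt}\int_\Gamma u\,dS=0$ and $\int_\Gamma g\,dS>0$, determines $\alpha$ through the solvability condition and recovers the formula \eqref{alpha}. The step I expect to be most delicate is the passage to the limit in the $v$-equation: controlling the singular terms $\eps\partial_t v_\eps$ and $\eps\Delta v_\eps$, and --- most importantly --- securing the strong $L^2$-compactness of $U_\eps$ that is indispensable for closing the nonlinear identity $u\xi=u$.
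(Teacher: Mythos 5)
Your proposal is correct and follows essentially the same route as the paper: uniform bounds plus Aubin--Lions compactness, passage to the limit in both rescaled equations to obtain the algebraic relation $(c+a_5)v=a_4\xi+\alpha$ (your signs are the correct ones; the paper's \eqref{eq27}, \eqref{eq31} contain an apparent sign typo), substitution into the limit of \eqref{eq13} to get \eqref{ob1,1}, the identity $u\xi=u$ via the $O(\eps)$ algebraic trick, and interior-in-time parabolic $W^{2,1}_p$ regularity. The only differences are cosmetic: you define $\alpha:=a_6w$ directly rather than as $\tfrac{a_5}{|\Gamma|}\int_\Gamma v\,dS$ (equivalent by the limit of \eqref{eq:15ode}), and you recover the initial condition from weak continuity of the trace map on $\Xspace$ instead of the paper's appeal to \cite{Wlok87}.
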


\begin{proof}
	By Theorem \ref{T.bounds1} there exists a subsequence $\eps\to 0$ (not relabeled) and functions $u\in \Xspace$, $v\in L^\infty(0,T;L^2(\Gamma))$, $w\in L^\infty(0,T)$ and $\xi\in L^\infty(0,T)$ such that
	\begin{align*}
		U_{\eps} &\weakto u \quad\text{ in }\Xspace,\\
		v_{\eps} &\weakstarto v  \quad\text{ in }L^\infty(0,T;L^2(\Gamma)),\\
		w_{\eps} &\weakstarto w \quad\text{ in }L^\infty(0,T),\\
		\frac{U_{\eps}}{U_{\eps}+{\eps}}&\weakstarto \xi \quad \text{ in }  L^{\infty}(\Gamma_T).
	\end{align*}
	By the Aubin-Lion's compactness Lemma \cite{Aubi63,Roub13} we also have
	\begin{equation}
		U_{\eps} \to u \quad\text{ in } L^2(\Gamma_T).
		\label{eq:T.conv1-3}
	\end{equation}
	With these convergence properties we can pass to the limit in the weak form of \eqref{eq13} and conclude that for any $\phi\in C^1_c(\Gamma\times [0,T))$ it holds
	\begin{equation}
		\int_{\Gamma_T} \partial_t\phi (u-u_0) \,dS\,dt= \int_{\Gamma_T} \big(\nabla\phi\cdot\nabla u -\phi\big(cv-a_4\xi(\cdot,t)\big)\big)\,dS\,dt\,.
	\end{equation}
   In particular, \eqref{ob1,1} holds in $H^{-1}(\Gamma)$ for almost all $t\in (0,T)$.
  Moreover, by \cite[Theorem 25.5]{Wlok87} we have $u\in C^0([0,T];L^2(\Gamma))$, and $u(\cdot,0)=u_0$ holds in the sense that  $u_0=\lim_{t\searrow 0}u(\cdot,t)$ in $L^2(\Gamma)$.

	Let $\phi \in C^{2,1}_c(\Gamma_T)$ be an arbitrary test function. Multiplying \eqref{eq14} by $\phi$ and integrating over $\Gamma_T$ we deduce, after integrating by parts, that
	\begin{align}
		-\eps\int_0^T &\langle v_\eps,\partial_t \phi \rangle\,dt =\eps \int_0^T \int_{\Gamma} v_\eps \Delta \phi\,dS\,dt
		-\int_0^T \int_{\Gamma}\Big(\eps a_1+\frac{\eps a_2U_{\eps}}{\eps a_3+U_{\eps}}+c\Big)v_\eps \phi \,dS\,dt \nonumber\\
		&+\int_0^T \int_{\Gamma}\frac{a_4 U_{\eps}\phi}{\eps+U_{\eps}}\,dS\,dt
		-\int_0^T \int_{\Gamma}a_5v_\eps \phi\,dS\,dt
		+ \int_0^T a_6 w_\eps \int_\Gamma \phi\,dS\,dt \,.
		\label{eq26}
	\end{align}
	Taking the limit in $(\ref{eq26})$ we obtain
	\begin{equation*}
		0 = -\int_0^T \int_{\Gamma} (c+a_5)v\phi\,dS\,dt -a_4\int_0^T \int_{\Gamma} \xi\phi\,dS\,dt +\int_0^T a_6 w \int_\Gamma \phi\,dS\,dt\,,\
	\end{equation*}
	hence
	\begin{equation}
		0 = -(c+a_5)v -a_4\xi +a_6 w \quad\text{ a.e.~in }\Gamma_T.
		\label{eq27}
	\end{equation}
	Similarly we deduce from \eqref{eq15} that
	\begin{equation}
		0 = m- \int_\Gamma u(\cdot,t) \,dS\quad\text{ in } (0,T)
		\label{eq28}
	\end{equation}
	and from \eqref{eq:15ode}
	\begin{equation}
		0 = \int_\Gamma( a_5 v(\cdot,t)-a_6w(t)) \,dS\quad\text{ a.e.~in } (0,T).
		\label{eq29}
	\end{equation}
	Finally, we define
	\begin{equation*}
		\alpha(t)=\frac{a_5}{\vert \Gamma \vert} \int_{\Gamma}v(\cdot,t)\,dS
	\end{equation*}
	such that \eqref{eq27} and  \eqref{eq29} imply
	\begin{equation}
		v=- \frac{a_4}{c+a_5}\xi+\frac{\alpha}{c+a_5} \qquad \mbox{ a.e. in } \Gamma_T\,. \label{eq31}
	\end{equation}
	Due to the boundedness of $g$ and $\xi$ we can apply parabolic $W^{2,1}_p$-regularity theory to \eqref{ob1,1}. In fact, fix arbitrary $\delta>0$ and $p\geq 1$.
  Choose a smooth cut-off function $\eta\in C^\infty_c((\frac{\delta}{2},T])$, $\eta=1$ in $[\delta,T]$ and use a smooth partition of unity for $\Gamma$ subordinate to a covering of $\Gamma$ by parametrized surface patches.
  In local coordinates we obtain that $\eta u$ solves a parabolic equation with bounded continuous coefficients, hence \cite[Theorem IV.9.1]{LaSU68} yields the $W^{2,1}_p$-regularity of $\eta u$ in local coordinates, with an estimate for the corresponding norms only depending on the data.
  Using the compactness of $\Gamma$ we finally deduce the $W^{2,1}_p$-regularity of $\eta u$, hence \eqref{eq2regu} holds.

	From \eqref{eq:T.conv1-3} we obtain for any
	test function $\phi \in C^0(\Gamma \times [0,T])$ that
	\begin{align}\label{31b}
		\int_0^T \int_{\Gamma} \phi (\xi u{-}u)\,dS\,dt =
		\lim_{j \to \infty}\int_0^T \int_{\Gamma} \phi \Big (\frac{U_{\eps}}{U_{\eps}+{\eps}} {-}1 \Big)U_{\eps}\,dS\,dt
		=-\lim_{j \to \infty} \int_0^T \int_{\Gamma} \frac{\eps \phi U_{\eps}}{\eps+U_{\eps}}\,dS\,dt=0\,,
	\end{align}
	which implies $\xi u=u$.
\end{proof}

\begin{remark}\label{2.Rxi}
	By Stampacchia's Lemma \cite[Theorem 4.4]{EvGa15} and the $W^{2,1}_p(\Gamma\times (\delta,T))$-regularity of $u$ for any $\delta>0$ one obtains $\partial_t u=\Delta u = 0$ almost everywhere in $\{u = 0\}$. In fact, we can apply the lemma to $W^{1,p}(\Gamma_T)$ and obtain the claim for $\partial_tu$, and then to $W^{2,p}(\Gamma)$ for almost all $t$ to obtain the corresponding property for $\Delta u$. This in particular yields the representation formula
	\begin{equation}
		\xi(\cdot,t)=
		\begin{cases}
			1 &\text{ a.e~in } \{u(\cdot,t)>0 \}\\
			\frac{\alpha(t) g(\cdot,t)}{1-g(\cdot,t)}&\text{ a.e~in } \{u(\cdot,t)=0 \}
		\end{cases} \label{xi}
	\end{equation}
	for almost all $t\in (0,T)$. By $\xi\leq 1$ we deduce that
  \begin{equation}
    \alpha g\leq 1-g \quad\text{ almost everywhere in }\{u=0\}.
    \label{eq:cond-u=0}
  \end{equation}
	Moreover, by an integration of \eqref{ob1,1} over $\Gamma$ and by \eqref{xi} we deduce that
	\begin{equation}
		\alpha(t) = \frac{\int_\Gamma (1-g)(\cdot,t)\xi(\cdot,t)\,dS}{\int_\Gamma g\,dS}
		= \frac{\int_{\{u(\cdot,t)>0\}} (1-g)(\cdot,t)\,dS}{\int_{\{u(\cdot,t)>0\}} g(\cdot,t)\,dS}
		\label{alpha}
	\end{equation}
	for almost all $t\in (0,T)$. Note that the second equality in \eqref{alpha} shows that $\alpha$ is already determined by $u$ and the data. Similarly, for any measurable set $A\supset \{u(\cdot,t)>0\}$ we deduce that
  \begin{equation}
		\alpha(t) = \frac{\int_A (1-g)(\cdot,t)\xi(\cdot,t)\,dS}{\int_A g(\cdot,t)\,dS}
		\label{alphaA}
	\end{equation}
  holds for almost all $t\in (0,T)$.

  We derive a further characterization of solutions. By the properties obtained so far we deduce from \eqref{ob1,1}, $\xi\leq 1$ and \eqref{alpha} that
  \begin{align}
    \partial_t u -\Delta u  
    &= -a_4(1-g) + \alpha g + \big(a_4(1-g) - \alpha g\big)_+\Chi_{\{u=0\}}.
    \label{alphaplus}
  \end{align}
  Vice versa, this equation implies \eqref{ob1,1}, with $\xi$ as \eqref{xi}, and the conditions on $\xi$ in \eqref{ob1,2}. 
\end{remark}

\subsection{Convergence to a parabolic obstacle-type problem for $ D < \infty$ \label{2.2}}

We now consider the case of finite cytosolic diffusion $D < \infty$.
In \cite{HaRo18} it is proved that also in this case the system \eqref{eq1}-\eqref{eq5} has a unique nonnegative solution $(u,v,w)$ with $u,v \in \Xspace$ and $w \in \XspaceOm$, provided that the initial data are such that $u^\eps_0,v^\eps_0 \in L^2(\Gamma)$ and $w^\eps_0 \in L^2(\Omega)$.
Again, this result first only covers the case of constant $c$. The proof, however, carries over to the present case.

For finite $D$ we use a similar rescaling of the general model
\eqref{eq1}-\eqref{eq5} as in the previous subsection but consider in addition to \eqref{eq12} that $D$ becomes large with $\eps \to 0$, more precisely
$D= \frac{\hat D}\eps $. This yields, after dropping the hats, the system
\begin{align}
	\partial_t U_\eps &=\Delta U_\eps+\Big(\eps a_1+\frac{\eps a_2U_\eps}{\eps a_3+U_\eps}+c\Big)v_\eps-
	\frac{a_4 U_\eps}{\eps+U_\eps } & \text{ \quad on $\Gamma_T$}\,,\label{eq50}\\
	\eps\partial_t v_\eps &=\eps\Delta v_\eps-\Big(\eps a_1+
	\frac{\eps a_2U_\eps}{\eps a_3+U_\eps}+c\Big)v_\eps+\frac{a_4 U_\eps}{\eps+U_\eps}-
	a_5v_\eps+a_6w_\eps & \text{ on } \Gamma_T\,,\label{eq51}\\
	\eps \partial_t w_\eps&=D\Delta w_\eps& \text{ on } \Omega_T\,,\label{eq52}\\
	-D\frac{\partial w_\eps}{\partial n}&=-a_5 v_\eps + a_6 w_\eps & \text{  on } \Gamma_T\,,\label{eq53}\\
	U_{\eps}(\cdot,0)&=U^\eps_0\,, \qquad v_{\eps}(\cdot,0)=v^\eps_0\,,\quad w_{\eps}(\cdot,0)=w^\eps_0\,,\label{eq53b}
\end{align}
where $\int_\Gamma \big(U^\eps_0 + \eps v^\eps_0\big)\,dS + \int_\Omega \eps w^\eps_0\,dx = m$.

Similarly as in Section \ref{2.1} we assume that for some $u_0\in L^2(\Gamma)$ with $\int_\Gamma u_0=m$ and some $C>0$ we have
\begin{equation}
	U^\eps_0\to u_0\quad\text{ in }L^2(\Gamma),\qquad
	\sup_{\eps> 0}\Big(\int_\Gamma |v^\eps_0|^2\,dS +
  \int_\Omega |w^\eps_0|^2\,dx\Big) \leq C.
	\label{eq:3init}
\end{equation}
We recall that a solution conserves the mass, that is
\begin{equation}
	\int_{\Omega} \eps w_\eps(\cdot,t)\,dx + \int_{\Gamma} \big(U_\eps(\cdot,t)+\eps v_\eps(\cdot,t)\big)\,dS=m \quad  \mbox{ for all }  t\in (0,T)\,. \label{eq54}
\end{equation}
We first prove some uniform bounds.

\begin{theorem}\label{T.bounds2}
	For any  nonnegative solution $(U_\eps, v_\eps, w_\eps)$ of \eqref{eq50}-\eqref{eq:3init} we have
	\begin{equation}
		\| U_\eps \|_{\Xspace}+
		\| v_\eps \|_{L^\infty(0,T;L^2(\Gamma))}+\| w_\eps \|_{L^\infty(0,T;L^2(\Gamma)))}
    +\| w_\eps \|_{L^2(0,T;H^1(\Omega))}
		\leq C\,. \label{eq55}
	\end{equation}
\end{theorem}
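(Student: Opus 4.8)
The plan is to follow the strategy of Theorem~\ref{T.bounds1}, replacing the ODE argument for $w_\eps$ (which was available only because $w$ was spatially constant when $D=\infty$) by a genuine energy estimate on $\Omega$ that is coupled to the surface through the Robin condition \eqref{eq53}. First I would test \eqref{eq51} by $v_\eps$ and integrate over $\Gamma$, and test \eqref{eq52} by $w_\eps$ and integrate over $\Omega$; after integration by parts the boundary term in the second identity becomes $\int_\Gamma w_\eps\, D\partial_n w_\eps = a_5\int_\Gamma v_\eps w_\eps - a_6\int_\Gamma w_\eps^2$ by \eqref{eq53}. Multiplying the $v_\eps$-identity by $a_5$ and the $w_\eps$-identity by $a_6$ and adding, the two coupling terms combine into a single source $2a_5a_6\int_\Gamma v_\eps w_\eps$, while the left-hand side carries the dissipation
\begin{equation*}
\eps a_5\|\nabla v_\eps\|_{L^2(\Gamma)}^2 + D a_6\|\nabla w_\eps\|_{L^2(\Omega)}^2 + a_5\int_\Gamma (c+a_5)v_\eps^2 + a_6^2\int_\Gamma w_\eps^2 .
\end{equation*}

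Exactly as in Theorem~\ref{T.bounds1} I would estimate the source term $a_4a_5\int_\Gamma U_\eps v_\eps/(\eps+U_\eps)\le a_4a_5\int_\Gamma v_\eps$ and the coupling term by Young's inequality with a parameter $\delta<1$ chosen close to $1$, namely $2a_5a_6\int_\Gamma v_\eps w_\eps\le \tfrac1\delta a_5^2\int_\Gamma v_\eps^2 + \delta a_6^2\int_\Gamma w_\eps^2$. Using $c\ge c_0$ and absorbing the $v_\eps^2$-contributions, this retains a strictly negative surface damping $(1-\delta)a_6^2\int_\Gamma w_\eps^2$ together with $\tfrac{a_5c_0}{2}\int_\Gamma v_\eps^2$ and the bulk dissipation $Da_6\|\nabla w_\eps\|_{L^2(\Omega)}^2$.

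The crucial difference from the case $D=\infty$ is that here $w_\eps$ solves a true PDE, so the retained damping controls only the \emph{trace} $\|w_\eps\|_{L^2(\Gamma)}$ and the gradient, not $\|w_\eps\|_{L^2(\Omega)}$ itself; recovering a full damping term for the energy $E:=a_5\|v_\eps\|_{L^2(\Gamma)}^2+a_6\|w_\eps\|_{L^2(\Omega)}^2$ is the main obstacle, and the naive integration would otherwise produce a $1/\eps$ blow-up. I would resolve it with the trace–Poincaré inequality $\|w_\eps\|_{L^2(\Omega)}^2\le C\big(\|\nabla w_\eps\|_{L^2(\Omega)}^2+\|w_\eps\|_{L^2(\Gamma)}^2\big)$, valid on the $C^3$-domain $\Omega$, which turns the dissipation into a lower bound of the form $c_1\|w_\eps\|_{L^2(\Omega)}^2$. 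Together with the $v_\eps$-damping this yields a differential inequality $\eps\tfrac{d}{dt}E + c_2 E\le C$ with $c_2>0$, whose integration (using \eqref{eq:3init} to bound $E(0)$) gives $\|v_\eps\|_{L^\infty(0,T;L^2(\Gamma))}+\|w_\eps\|_{L^\infty(0,T;L^2(\Omega))}\le C$.

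Finally, integrating the same differential inequality over $(0,T)$ also controls $\int_0^T\|\nabla w_\eps\|_{L^2(\Omega)}^2\,dt$, which combined with the $L^\infty(0,T;L^2(\Omega))$ bound yields $\|w_\eps\|_{L^2(0,T;H^1(\Omega))}\le C$. For $U_\eps$ I would argue exactly as in Theorem~\ref{T.bounds1}: the reaction terms on the right of \eqref{eq50} are bounded in $L^2(\Gamma_T)$, since $0\le U_\eps/(\eps+U_\eps)\le1$, $\eps a_2U_\eps/(\eps a_3+U_\eps)\le \eps a_2$, $c$ is bounded and $v_\eps\in L^2(\Gamma_T)$, so parabolic $L^2$-theory together with \eqref{eq:3init} yields $\|U_\eps\|_{\Xspace}\le C$ and hence \eqref{eq55}.
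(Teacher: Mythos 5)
Your proposal is correct and follows essentially the same route as the paper's proof: testing \eqref{eq51} with $a_5v_\eps$ and \eqref{eq52} with $a_6w_\eps$, combining the coupling terms via the Robin condition, applying Young's inequality with $\delta<1$ close to $1$, converting the trace damping plus bulk gradient dissipation into an $L^2(\Omega)$ damping via the Poincar\'e/trace inequality, and closing with Gronwall and parabolic $L^2$-theory for $U_\eps$. The only slight imprecision is that the $L^2(0,T;H^1(\Omega))$ bound for $w_\eps$ should be obtained by integrating the inequality \emph{before} all the gradient dissipation is absorbed into the Poincar\'e step (or by retaining a fixed fraction of $\int_\Omega D|\nabla w_\eps|^2\,dx$), which is exactly what the paper does by integrating \eqref{eq2001} directly.
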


\begin{proof}
	As in the proof of Theorem \ref{T.bounds1} we test \eqref{eq51} with $a_5v_\eps$ and obtain
	\begin{align*}
		\frac{d}{dt} \int_{\Gamma} \frac{\eps a_5 v^2_\eps}{2}\,dS
		&= - \int_\Gamma  \eps a_5 {\vert \nabla v_\eps \vert }^2\,dS -\int_{\Gamma} a_5 \Big(\eps a_1+\frac{\eps a_2U_\eps}{\eps a_3+U_\eps}+c\Big)v_\eps^2\,dS
		+ \int_{\Gamma} \frac{a_4 a_5 U_\eps v_\eps}{\eps+U_\eps}\,dS\\
		&\qquad
		-\int_{\Gamma} \big( ( a_5 v_\eps)^2- a_5 a_6 v_\eps w_\eps \big )\,dS\,.
	\end{align*}
	By virtue of \eqref{eq52} and \eqref{eq53} we  compute
	\begin{equation*}
		\frac{d}{dt} \int_{\Omega} \frac{\eps a_6 w^2_\eps}{2}\,dx =
		- \int_{\Omega} a_6 D {\vert \nabla w_\eps \vert }^2\,dx
		+\int_{\Gamma}( a_5a_6 w_\eps v_\eps - a_6^2w_\eps^2)\,dS\,.
	\end{equation*}
	Combining both inequalities and using $c \geq c_0 >0 $ and $ \frac{U_\eps}{\eps+U_\eps} \leq 1$ implies
	\begin{align}
		&\frac{d}{dt} \Big ( \int_{\Gamma} \frac{\eps a_5 v^2_\eps}{2}\,dS +  \int_{\Omega} \frac{\eps a_6 w^2_\eps}{2}\,dx\Big )
		+ \int_\Gamma  \eps a_5 {\vert \nabla v_\eps \vert }^2\,dS + \int_{\Omega} a_6 D {\vert \nabla w_\eps \vert }^2\,dx
		+ \int_\Gamma a_5 c_0  v^2_\eps\,dS \notag\\
		&\qquad\qquad \leq \int_\Gamma a_4 a_5 v_\eps \,dS - \int_{\Gamma} \big ( a_5^2 v_\eps^2-2a_5a_6 v_\eps w_\eps +a_6^2 w_\eps^2\big)\,dS \notag\\
    &\qquad\qquad \leq \frac{1}{2}\int_\Gamma a_5 c_0  v^2_\eps\,dS + C -c\int_\Gamma w_\eps^2 \,dS, \label{eq2001}
	\end{align}
  where in the last step we have used a Youngs inequality as in the derivation of \eqref{eq:L2bound_v}, and where $C,c>0$ only depend on the data.
  Next, applying  Poincar\'e's  inequality
  for functions with mean value zero on the boundary, we deduce
  \begin{align*}
    \int_\Omega w_\eps^2 \,dx &
    \leq 2\int_\Gamma \Big| w_\eps -\frac{1}{\vert \Gamma \vert} \int_{\Gamma} w_\eps\,dS \Big|^2\,dS
    +2\frac{|\Omega|}{{\vert \Gamma \vert}^2} \Big ( \int_{\Gamma} w_\eps \,dS\Big )^2 \\
    & \leq C\Big(\int_\Omega | \nabla w_\eps |^2\,dx +  \int_{\Gamma} {w_\eps}^2\,dS \Big)
  \end{align*}
  and therefore we obtain from \eqref{eq2001}
  \begin{equation*}
    \eps\frac{d}{dt}\Big( \int_{\Gamma}  v_\eps^2\,dS +  \int_{\Omega} w_\eps^2\,dx\Big) \leq C -  c\Big( \int_{\Gamma}  v_\eps^2\,dS +  \int_{\Omega} w_\eps^2\,dx\Big)\,.
  \end{equation*}
  Hence \eqref{eq:3init} yields a uniform bound for $\|v_\eps\|_{L^\infty(0,T;L^2(\Gamma))}$ and  $\|w_\eps\|_{L^\infty(0,T;L^2(\Omega))}$.

  By an integration of \eqref{eq2001} we in addition obtain
	\begin{align*}
		\int_0^T \int_{\Omega} D |\nabla w_\eps|^2 \,dx\,dt
		 \leq C\,.
	\end{align*}
	Finally, weak solution theory for parabolic equations (see \cite[Section 7.1]{Evan10}), implies
	a uniform bound also for $\| U_\eps \|_{\Xspace}$.
\end{proof}
 With these uniform estimates we can pass to the limit $\eps \to 0$ to obtain the following theorem.

\begin{theorem} \label{T.conv2}
	Consider a sequence $(U_\eps,v_\eps,w_{\eps})$ of nonnegative solutions to \eqref{eq50}-\eqref{eq:3init} with total mass $ m>0$ and
	under Assumption \eqref{eq:3init}.
	Then there exists a subsequence $\eps \to 0$,  a function $u\in \Xspace$ with $u \in W^{2,1}_p(\Gamma\times (\delta,T))$
	for any $\delta>0$, $1 \le p < \infty$, functions  $w \in L^2(0,T;H^1(\Omega))$ with $w(\cdot,t)\in C^\infty(\Omega)$ for almost all $t\in (0,T)$
	and $\xi \in L^{\infty}(\Gamma_T)$ with $0 \leq \xi \leq 1$, such that
	\begin{equation*}
		U_\eps \weakto u \quad \text{in } \Xspace\,,  \quad
	w_\eps \weakto w \quad \text{in } L^2(0,T;H^1(\Omega)) \quad \mbox{ and } \quad \frac{U_{\eps}}{U_{\eps}+\eps} \weakstarto \xi \quad \mbox{ in } L^{\infty}(\Gamma_T)\,.
	\end{equation*}
  These functions satisfy equations \eqref{ob2,1}, \eqref{ob2,2a} and \eqref{ob2,3} pointwise almost everywhere and the Robin condition in \eqref{ob2,2} in a weak sense.
  Furthermore we have that $u(\cdot,0)=u_0$ on $\Gamma$ in $L^2(\Gamma)$ and that $\int_{\Gamma} u(\cdot,t)\,dS=m$ holds for all $t \in [0,T]$.

	Moreover $u$ and $w$ are  nonnegative with
	$w \in L^\infty(0,T;C^0(\bar \Omega))$
	and for all $\delta>0$ and any $1\leq  p < \infty$ it holds
	\begin{equation*}
		\| u \|_{W^{2,1}_p(\Gamma\times (\delta,T))}+\| w \|_{L^\infty(0,T;C^0(\bar \Omega))}
		\leq C(\delta,T,p)\,.
	\end{equation*}
\end{theorem}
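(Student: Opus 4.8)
The plan is to follow the blueprint of the proof of Theorem~\ref{T.conv1}, treating the new bulk variable $w$ and its coupling to the surface equations as the extra ingredient. First I would invoke the uniform bounds of Theorem~\ref{T.bounds2} to extract a (non-relabeled) subsequence with $U_\eps \weakto u$ in $\Xspace$, $v_\eps \weakstarto v$ in $L^\infty(0,T;L^2(\Gamma))$, $w_\eps \weakto w$ in $L^2(0,T;H^1(\Omega))$ together with weak* convergence of the traces $w_\eps\big|_\Gamma$ in $L^\infty(0,T;L^2(\Gamma))$, and $\tfrac{U_\eps}{U_\eps+\eps}\weakstarto\xi$ in $L^\infty(\Gamma_T)$ with $0\le\xi\le1$; nonnegativity of $u$ and $w$ is inherited from that of $U_\eps,w_\eps$, since weak limits preserve the positivity cone. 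Exactly as for $D=\infty$, the bound $\partial_t U_\eps\in L^2(0,T;H^1(\Gamma)^*)$ and Aubin--Lions \cite{Aubi63,Roub13} yield the strong convergence $U_\eps\to u$ in $L^2(\Gamma_T)$, which I will use to identify nonlinear surface terms.

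Next I would pass to the limit in each equation. Testing \eqref{eq50} and letting $\eps\to0$ gives $\partial_t u-\Delta u = cv-a_4\xi$ in the weak sense, while in \eqref{eq51} all $\eps$-weighted terms drop out, leaving the pointwise algebraic relation $(c+a_5)v = a_4\xi + a_6 w$, i.e. $v=(a_4\xi+a_6w)/(c+a_5)$. Substituting this and using $g=c/(c+a_5)$, $1-g=a_5/(c+a_5)$ turns the $u$-equation into \eqref{ob2,1}. For the bulk I would test \eqref{eq52}--\eqref{eq53} against $\psi(x)\eta(t)$ with $\psi\in H^1(\Omega)$, $\eta\in C_c^\infty(0,T)$: the term $\eps\int_0^T\!\int_\Omega \partial_t w_\eps\,\psi\eta$ equals $-\eps\int\!\int w_\eps\psi\eta'$ and vanishes, while weak convergence of $\nabla w_\eps$ in $L^2(\Omega_T)$ and of the boundary traces of $v_\eps,w_\eps$ give, for a.a.\ $t$,
\begin{equation*}
  D\int_\Omega \nabla w\cdot\nabla\psi\,dx = \int_\Gamma (a_5 v-a_6 w)\psi\,dS
  \qquad\text{for all }\psi\in H^1(\Omega).
\end{equation*}
Choosing $\psi\in C_c^\infty(\Omega)$ gives $\Delta w(\cdot,t)=0$ in $\calD'(\Omega)$, hence $w(\cdot,t)\in C^\infty(\Omega)$ by Weyl's lemma, which is \eqref{ob2,2a}; inserting the relation for $v$ into the remaining boundary term produces exactly the weak form of the Robin condition \eqref{ob2,2}, namely $D\,\partial_n w = a_4(1-g)\xi - a_6 g w$.

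The crucial coupling step is the regularity of $u$, which can be closed only after controlling $w$. Since $w(\cdot,t)$ is harmonic with Robin data $D\,\partial_n w + a_6 g\, w = a_4(1-g)\xi$, where $0\le a_4(1-g)\xi\le a_4$ and, thanks to \eqref{assum}, the Robin coefficient satisfies $a_6 g\ge a_6 g_0>0$ with $g_0:=c_0/(c_0+a_5)$, the weak maximum principle gives $0\le w(\cdot,t)\le a_4/(a_6 g_0)$ uniformly in $t$, and elliptic regularity for the Robin problem with $L^\infty$ boundary datum (De Giorgi--Nash--Moser up to the $C^3$ boundary, using $\dim\Gamma=2$) upgrades this to a uniform $C^{0,\beta}(\bar\Omega)$ bound; hence $w\in L^\infty(0,T;C^0(\bar\Omega))$. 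With $w$ and $\xi$ bounded, the right-hand side of \eqref{ob2,1} is bounded in $L^p(\Gamma\times(\delta,T))$, and parabolic $W_p^{2,1}$-theory applied exactly as in Theorem~\ref{T.conv1} (time cut-off $\eta\in C^\infty_c((\tfrac{\delta}{2},T])$, a partition of unity, local coordinates, \cite[Theorem~IV.9.1]{LaSU68}) yields $u\in W_p^{2,1}(\Gamma\times(\delta,T))$ with the asserted estimate.

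Finally, the remaining assertions follow as for $D=\infty$: $u\in\Xspace$ gives $u\in C^0([0,T];L^2(\Gamma))$ by \cite[Theorem~25.5]{Wlok87}, and together with $U_0^\eps\to u_0$ in $L^2(\Gamma)$ this yields $u(\cdot,0)=u_0$; passing to the limit in the mass identity \eqref{eq54} (the $\eps w_\eps$ and $\eps v_\eps$ contributions vanish) and using continuity in time gives $\int_\Gamma u(\cdot,t)\,dS=m$ for all $t$; and the obstacle relation $u\xi=u$ in \eqref{ob2,3} is obtained exactly as in \eqref{31b}, combining the strong $L^2$-convergence of $U_\eps$ with $\tfrac{\eps U_\eps}{\eps+U_\eps}\le\eps\to0$. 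I expect the main obstacle to be precisely this interplay between bulk and surface regularity: one must first extract the limiting elliptic Robin problem and establish the \emph{uniform in $t$} $C^0(\bar\Omega)$-bound for $w$ before the parabolic regularity of $u$ can be closed, and the identification of the weak Robin condition hinges on the weak convergence of the boundary traces of $v_\eps$ and $w_\eps$, which is the genuinely new point compared to the $D=\infty$ case.
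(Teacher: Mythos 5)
Your proposal is correct and follows the paper's overall blueprint (compactness from Theorem \ref{T.bounds2}, Aubin--Lions for $U_\eps$, limit passage equation by equation, the algebraic relation for $v$ from \eqref{eq51}, elimination of $v$ to obtain \eqref{ob2,1}--\eqref{ob2,2}, identification of $u\xi=u$, mass and initial conditions, then parabolic $W^{2,1}_p$ theory), but you take a genuinely different route at the one delicate step: the uniform-in-time bound for $w$. The paper does \emph{not} eliminate $v$ there; it tests the bulk equation \eqref{eq62} with $(k_pw)^{p-1}$, $k_p=a_6/a_5$, and the relation \eqref{eq61} with $v^{p-1}$, so that the coupling terms combine into the sign-definite quantity $-a_5\int_\Gamma (v-k_pw)\bigl(v^{p-1}-(k_pw)^{p-1}\bigr)\,dS\le 0$; together with $c\ge c_0$ and $\xi\le 1$ this yields $v\in L^\infty(0,T;L^p(\Gamma))$ for every $p<\infty$, and then Nittka's elliptic regularity \cite{Nitt11} for the Robin problem with boundary datum $a_5v(t)\in L^p(\Gamma)$ gives $\|w(t)\|_{C^{0,\gamma}(\Omega)}\le C\bigl(\|v(t)\|_{L^p(\Gamma)}+\|w(t)\|_{L^2(\Omega)}\bigr)$, uniformly in $t$. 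You instead substitute $v=(a_4\xi+a_6w)/(c+a_5)$ first, so that $w(\cdot,t)$ solves a Robin problem with $L^\infty$ datum $a_4(1-g)\xi\le a_4$ and strictly positive coefficient $a_6 g\ge a_6g_0>0$, and obtain the explicit bound $0\le w\le a_4/(a_6g_0)$ from the weak maximum principle --- which is rigorous at this regularity level if implemented by testing the weak form with $(w-k)_+$, $k=a_4/(a_6g_0)$, since the boundary integrand is then nonpositive (your phrasing via Hopf-type boundary arguments would require more smoothness than $w(\cdot,t)\in H^1(\Omega)$ with $L^\infty$ data provides, so the test-function version is the one to use) --- followed by boundary De~Giorgi--Nash--Moser for the H\"older continuity up to $\Gamma$. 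Both routes hinge on the same structural facts, namely $c\ge c_0$ and $0\le\xi\le1$; yours is more elementary and gives an explicit $L^\infty$ bound for $w$, while the paper's argument additionally produces the $L^\infty(0,T;L^p(\Gamma))$ bounds on $v$ itself. The remaining steps of your proof, including the identification of the trace of $w$ (which the paper handles via weak continuity of the trace operator $H^1(\Omega)\embeds L^2(\Gamma)$) and the order of the bootstrap ($w$ first, then $u$), coincide with the paper's.
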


\begin{proof}
	By the uniform bounds provided by Theorem \ref{T.bounds2} we obtain a subsequence and functions $w,u,v, \xi$ such that
	\begin{align}
		w_\eps &\weakto w \quad \text{in } L^2(0,T;H^1(\Omega))  \label{eq65a}\\
		U_\eps &\weakto u \quad \text{in } \Xspace \label{eq65}\\
		v_\eps &\weakstarto v \quad \text{in } L^\infty(0,T;L^2(\Gamma))  \label{eq65c} \\ \frac{U_\eps}{\eps+U_\eps} &\weakstarto \xi \quad \text{in } L^{\infty}(\Gamma_T). \nonumber
	\end{align}
	In particular, we have by the Aubin-Lions Lemma
	that $U_\eps \to u$ in $L^2(\Gamma_T)$.
	The continuity of the trace map $H^1(\Omega)\embeds L^2(\Gamma)$ yields that $w_\eps \weakto w$ in $L^2(\Gamma_T)$.

	We can now  multiply  \eqref{eq50},\eqref{eq51},\eqref{eq52} and \eqref{eq53} by suitable test functions, integrate and pass to the limit $\eps \to 0$, to deduce that
	\begin{align}
		\partial_t u &=\Delta u+cv-a_4 \xi & \text{ on } \Gamma_T \,,\label{eq60}\\
		0 &=-cv+a_4 \xi-a_5v+a_6w &\text{ on } \Gamma_T\,, \label{eq61}\\
		0 &=D\Delta w & \text{ on } \Omega_T \,, \label{eq62} \\
			-D\frac{\partial w}{\partial n}&=-a_5v+a_6w &\text{ on } \Gamma_T\,, \label{eq63}
	\end{align}
are satisfied in a weak sense.
Since the arguments are similar to those used in the proof of Theorem \ref{T.conv1}, we  only consider  $w$ here.
  Multiplying \eqref{eq53} with a test function $\phi\in C^1_c(\bar\Omega\times (0,T))$ and using \eqref{eq52} we obtain
  \begin{align*}
    \int_{\Omega_T} \big(-\eps \partial_t\phi w_\eps +  \nabla\phi\cdot\nabla w_\eps\big)\,dx\,dt &= \int_{\Gamma_T} \phi (a_5 v_\eps - a_6 w_\eps)\,dS\,dt\,.
  \end{align*}
  Passing to the limit $\eps\to 0$ and using the convergence properties obtained above we deduce that
  \begin{align*}
    \int_{\Omega_T} \nabla\phi\cdot\nabla w\,dx\,dt &= \int_{\Gamma_T} \phi (a_5 v - a_6 w)\,dS\,dt,
  \end{align*}
  which implies that \eqref{eq62}, \eqref{eq63}  holds in a weak sense. In particular $w(\cdot,t)$ is harmonic in $\Omega$ for almost all $t\in (0,T)$ and hence smooth inside $\Omega$.

	Finally, it follows exactly in the same way as in \eqref{31b} that
 $\xi u=u$.

  By the uniform bounds \eqref{eq55} on $w_\eps$ and $v_\eps$ we obtain
	$	\int_{\Omega} \eps w_\eps(\cdot,t)\,dx + \int_{\Gamma} \eps v_\eps(\cdot,t) \,dS \to 0$, which together with \eqref{eq54},\eqref{eq65} yields
	$\int_{\Gamma} u(\cdot,t)\,dS=m$ for almost all $t$. Since $u\in \Xspace\embeds C^0([0,T];L^2(\Gamma))$ this equality even holds for all $t\in [0,T]$.
	Since $0 \le \frac{U_\eps}{\eps+U_\eps} \le 1$ the corresponding bounds for $\xi$ follow.
  Furthermore, by \eqref{eq55} and \eqref{eq65a}, \eqref{eq65}, \eqref{eq65c} we deduce
	\begin{equation*}
		\| u \|_{\Xspace}+\| v \|_{L^\infty(0,T;L^2(\Gamma))}+\| w \|_{L^2(0,T;H^1(\Omega))}
		\leq C\,.
	\end{equation*}
	To improve these bounds, we test for $p>2$, equation  \eqref{eq62} with $(k_pw)^{p-1}$ , $k_p := \frac{a_6}{a_5}$
	as well as \eqref{eq61} with $v^{p-1}$ and we find almost everywhere in $(0,T)$
	\begin{align*}
		0=&- \int_{\Omega} D{k_p}^{p-1}(p-1)w^{p-2}{\vert \nabla w \vert}^2\,dx +\int_0^T \int_{\Gamma} {k_p}^{p-1}(a_5 v-a_6 w)w^{p-1}\,dS\\
		&- \int_{\Gamma}\big((a_5 v-a_6 w)v^{p-1}-a_4 \xi v^{p-1} +cv^p\big)\,dS\\
		=&- \int_{\Omega} D{k_p}^{p-1}(p-1)w^{p-2}{\vert \nabla w \vert}^2\,dx
		- \int_{\Gamma} a_5 (v-k_pw)\big(v^{p-1}-(k_pw)^{p-1}\big)\,dS\\
		&+ \int_{\Gamma}\big( a_4 \xi v^{p-1}-cv^p\big)\,dS\\
	 \le & \int_{\Gamma} \big(a_4 \xi v^{p-1}-cv^p\big)\,dS\,.
	\end{align*}
	Thus, using Young's inequality, $c\geq c_0$ and $|\xi|\leq 1$ we conclude
	\begin{equation}
		 \int_{\Gamma} v^p \,dS \le C \quad\text{ almost everywhere on  }(0,T)\label{eq70}
	\end{equation}
	and hence $v$ is bounded in $L^\infty(0,T;L^p(\Gamma))$ for any $1 \le p < \infty$.
	By \cite{Nitt11} and \eqref{eq62} we obtain  for some $\gamma >0$ and for almost all $t\in (0,T)$ that $w(t) \in C^{0,\gamma}(\Omega)$, with
	\begin{equation*}
		\| w(t) \|_{C^{0,\gamma}(\Omega)} \le C\Big(\| v(t) \|_{L^p(\Gamma)} + \|w(\cdot,t)\|_{L^2(\Omega)}\Big)
	\end{equation*}
	for any $p>2$. Therefore, this estimate combined with \eqref{eq70} yields that $w \in L^\infty(0,T;C^0(\bar \Omega))$ for any $p\in [1,\infty)$.
	Finally, by parabolic $L^p-$regularity for \eqref{eq60}, see the arguments in the proof of Theorem \ref{T.conv1},
	we deduce that
	$	\| u \|_{W^{2,1}_p(\Gamma\times (\delta,T))} \le C$ for any $\delta>0$, $1 \le p < \infty$.

	Finally, we  observe that \eqref{eq61} is equivalent to $v=\frac{1-g}{a_5}\big ( a_4 \xi + a_6w\big )$.
	Using this,
	it is easy to see that \eqref{eq60} - \eqref{eq63} are equivalent to \eqref{ob2,1}-\eqref{ob2,2}.
\end{proof}

The system \eqref{ob2,1}-\eqref{ob2,2} can be formulated as an obstacle-type problem in terms of $u$ and $\xi$ only. This formulation will be most convenient for the analysis in Section \ref{4} and contains a non-local operator that we introduce now.
Consider for $s\in L^2(\Gamma)$ and $h\in L^\infty(\Gamma)$, $h\geq 0$, $|\{h>0\}|>0$, the solution $z$ of
\begin{equation}
	0 =\Delta z \text{ in } \Omega, \qquad
	\frac{\partial z}{\partial n}  + hz = s \text{ on }\Gamma. \label{eq4,3a}%
\end{equation}
This defines a linear operator $L_h:L^2(\Gamma)\to H^1(\Omega)$ via $L_hs:=z$. We collect some properties of the operator $L_h$.

\begin{lemma}
\label{L3.1}
Let $h\in L^\infty(\Gamma)$, $h\geq 0$, $|\{h>0\}|>0$, be given. Then the following hold.
\begin{enumerate}
  \item $L_h:L^2(\Gamma)\to H^1(\Omega)$ is continuous.
  \item $L_h:L^2(\Gamma)\to L^2(\Gamma)$ is self-adjoint , that is
  \begin{equation}\label{Lselfadjoint}
    \int_\Gamma s_1 L_h(s_2)\,dS = \int_\Gamma L_h(s_1)s_2\,dS\,.
  \end{equation}
  \item  It holds
  \begin{equation}\label{Lidentity}
    L_hh=1\,.
  \end{equation}
  \item $h\mapsto L_h$ is monotone decreasing in the following sense: For any $h_1,h_2\in L^\infty(\Gamma)$ with $0\leq h_1\leq h_2$ we have
  \begin{equation}\label{Lmonotoneh}
    L_{h_1}(s)\geq L_{h_2}(s) \quad\text{ for all }s\in L^2(\Gamma),\, s\geq 0\,.
  \end{equation}
  \item $L_h$ is positive, more precisely there exists a positive constant $c=c(h,\Omega)$ such that for all $s\geq 0$
  \begin{equation}
    L_h(s) \geq c\int_\Gamma s\,dS\quad\text{ in }\overline{\Omega}.
    \label{Lpositive}
  \end{equation}
\end{enumerate}
\end{lemma}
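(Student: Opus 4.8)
The plan is to base the entire lemma on the weak formulation of \eqref{eq4,3a}: the function $z = L_h s \in H^1(\Omega)$ is characterized by
\[
  \int_\Omega \nabla z\cdot\nabla\varphi\,dx + \int_\Gamma hz\varphi\,dS = \int_\Gamma s\varphi\,dS \qquad\text{for all }\varphi\in H^1(\Omega),
\]
with a symmetric bilinear form $a(z,\varphi)$ on the left. For (1) I would check that $a$ is bounded (via trace continuity $H^1(\Omega)\embeds L^2(\Gamma)$) and coercive on $H^1(\Omega)$: since $h\geq 0$ and $|\{h>0\}|>0$, a generalized Poincaré inequality gives $\|v\|_{L^2(\Omega)}^2\leq C(\|\nabla v\|_{L^2(\Omega)}^2+\int_\Gamma hv^2\,dS)$ (a constant function annihilating the boundary term must vanish), so $a(z,z)\gtrsim\|z\|_{H^1(\Omega)}^2$. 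Lax--Milgram then yields existence, uniqueness and the bound $\|L_h s\|_{H^1(\Omega)}\leq C\|s\|_{L^2(\Gamma)}$. Claim (2) is immediate from the symmetry of $a$: testing the equation for $z_1=L_h s_1$ with $z_2=L_h s_2$ and conversely gives $\int_\Gamma s_1 L_h s_2\,dS = a(z_1,z_2)=\int_\Gamma s_2 L_h s_1\,dS$. For (3) I observe that $z\equiv 1$ satisfies $\Delta z=0$ and $\partial_n z + hz = h$, so it solves \eqref{eq4,3a} with $s=h$; uniqueness forces $L_h h=1$.

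For (4) the key preliminary step is positivity of $L_h$: testing the weak formulation with $\varphi=(L_h s)^-$ and using $s\geq 0$ shows $\int_\Omega|\nabla (L_h s)^-|^2\,dx+\int_\Gamma h((L_h s)^-)^2\,dS\leq 0$, whence $(L_h s)^-=0$ and $L_h s\geq 0$. Given $0\leq h_1\leq h_2$ and $s\geq 0$, I subtract the two weak formulations for $z_i=L_{h_i}s$; writing $h_1 z_1-h_2 z_2=h_1(z_1-z_2)+(h_1-h_2)z_2$ shows that $d:=L_{h_1}s-L_{h_2}s$ solves the $h_1$-Robin problem with right-hand side $(h_2-h_1)z_2=(h_2-h_1)L_{h_2}s\geq 0$. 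Positivity of $L_{h_1}$ then gives $d\geq 0$, i.e.\ \eqref{Lmonotoneh}.

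For (5) I would first use the monotonicity (4): since $h\leq\|h\|_\infty=:H$, we have $L_h s\geq L_H s$, so it suffices to bound $z=L_H s$ from below with $c=c(H,\Omega)$. This $z$ is harmonic and, by positivity, nonnegative; integrating $\Delta z=0$ over $\Omega$ and inserting the Robin condition gives $\int_\Gamma h z\,dS=\int_\Gamma s\,dS=:M$, hence $\int_\Gamma z\,dS\geq M/H$. The Poisson representation on $\Omega$ together with a positive lower bound on the Poisson kernel over compact subsets then yields $z\geq\kappa M$ on a fixed interior ball. Finally I propagate this to the boundary: the minimum of the harmonic function $z$ over $\bar\Omega$ is attained at some $x_0\in\Gamma$, and applying Hopf's lemma on an interior tangent ball converts the strict negativity of $\partial_n z(x_0)$ into a quantitative estimate which, combined with $\partial_n z(x_0)=s(x_0)-Hz(x_0)\leq -Hz(x_0)$ from the Robin condition, produces $z(x_0)\geq c''M$. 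As the minimum lies on $\Gamma$, this gives $L_H s\geq c''M$ throughout $\bar\Omega$.

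I expect the last, boundary lower bound to be the main obstacle. The strong maximum principle hands us strict positivity in the interior for free, but a \emph{uniform} positive bound up to $\Gamma$ genuinely uses the Robin structure through Hopf's lemma and a quantitative Hopf barrier depending on the interior-ball radius of $\Omega$. The low boundary regularity of $s\in L^2(\Gamma)$, which a priori obstructs a clean application of Hopf, I would circumvent by first proving the estimate for smooth $s\geq 0$ (so that $z$ is smooth up to $\Gamma$) and then passing to the limit using the $H^1$-continuity of $L_H$ and the local uniform convergence of harmonic functions; equivalently, one may formulate (5) via the positivity of the solution kernel of $L_H$ and obtain uniformity in $y\in\Gamma$ by a compactness argument.
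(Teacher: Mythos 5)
Your parts (1)--(4) match the paper's proof almost verbatim: energy estimate plus a generalized Poincar\'e inequality for continuity, symmetry of the form (the weak version of the paper's Green's identity) for \eqref{Lselfadjoint}, uniqueness for \eqref{Lidentity}, and nonnegativity via the negative part followed by the comparison argument for \eqref{Lmonotoneh}. The real point of comparison is (5), where both arguments share the same skeleton: reduce via \eqref{Lmonotoneh} to the constant coefficient $H=\|h\|_{L^\infty(\Gamma)}$; obtain an interior lower bound from the Green's-function (Poisson-kernel) representation together with $\int_\Gamma Hz\,dS=\int_\Gamma s\,dS$; then transfer the bound to the boundary by a quantitative Hopf lemma played against the Robin condition at a boundary minimum point. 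The genuine difference is how the low regularity of $s\in L^2(\Gamma)$ (which prevents a direct application of Hopf to $z$, since $z$ need not be $C^1(\bar\Omega)$) is handled. The paper constructs an auxiliary comparison function $\tilde\zeta\le\zeta$, harmonic in $\Omega\setminus K$ with homogeneous Robin data $\partial_n\tilde\zeta+m\tilde\zeta=0$ on $\Gamma$, hence smooth up to $\Gamma$, and applies Hopf to $\tilde\zeta$; this forces a case distinction over whether the minimum sits on $\partial K$ or on $\Gamma$, and a second Green's function for $\Omega\setminus K$. You instead prove the estimate for smooth $s\ge0$, where $z$ itself is regular up to $\Gamma$, and pass to the limit by density and the $H^1$-continuity of $L_H$. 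Your route is more elementary (no auxiliary domain, no second Green's function, no case distinction, since $z$ is harmonic in all of $\Omega$ and its minimum must lie on $\Gamma$), at the mild cost of obtaining \eqref{Lpositive} only a.e.\ up to the boundary -- the correct reading for $H^1$ functions anyway -- and of having to observe that the constant in the smooth case depends only on $H$, the interior-ball radius and the Green's function of $\Omega$, so it survives the approximation.

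Two slips should be repaired, though neither is structural. First, your Robin inequality points the wrong way: since $s\ge0$, the correct statement is $\partial_n z(x_0)=s(x_0)-Hz(x_0)\ge -Hz(x_0)$, and it is this \emph{lower} bound that must be combined with the Hopf \emph{upper} bound $\partial_n z(x_0)\le -c\,\bigl(z(y_{x_0})-z(x_0)\bigr)$, where $y_{x_0}$ is the center of the interior tangent ball at $x_0$; this gives $(H+c)\,z(x_0)\ge c\,z(y_{x_0})$. As literally written you combine two upper bounds on $\partial_n z(x_0)$, from which nothing follows. Second, the interior lower bound must be established not merely on ``a fixed interior ball'' but on a compact set containing all tangent-ball centers $y_{x_0}$, $x_0\in\Gamma$ (for instance $\{x\in\Omega:\operatorname{dist}(x,\Gamma)\ge\varrho\}$, the analogue of the paper's set $K_1$), because the location of the boundary minimum is not known in advance. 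With these corrections your argument is sound.
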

\begin{proof}
We first have
\begin{equation*}
  \int_\Omega |\nabla z|^2\,dx = \int_\Gamma \big(-h|z|^2 + sz\big)\,dS \leq - \int_\Gamma h|z|^2\,dS  + \|s\|_{L^2(\Gamma)} \|z\|_{L^2(\Gamma)}.
\end{equation*}
Since there holds a generalized Poincar{\'e} inequality in $\{\zeta\in H^1(\Omega): \int_\Gamma h\zeta^2 \leq 1\}$ we deduce
\begin{equation*}
  \|z\|_{H^1(\Omega)}^2 \leq C \Big(\int_\Omega |\nabla z|^2 + \int_\Gamma h|z|^2 \Big) \leq C\|s\|_{L^2(\Gamma)} \|z\|_{H^1(\Omega)},
\end{equation*}
from which $\|z\|_{H^1(\Omega)}\leq C\|s\|_{L^2(\Gamma)}$ and the desired continuity of $L_h$ follow.

The second statement is obtained from
\begin{equation*}
  \int_\Gamma \big(s_1 L_h(s_2) - L_h(s_1)s_2\big)\,dS = \int_\Omega \big(z_2\Delta z_1 - z_1\Delta z_2\big)\,dx = 0\,.
\end{equation*}
The third property is easily verified from the definition of $L_h$.

We next prove that $L_h$ is non-negative, i.e.
\begin{equation}\label{Lnonneg}
  s\geq 0\,\implies\, L_hs \geq 0.
\end{equation}
In fact, with $z:=L_hs$, by a partial integration we deduce
\begin{equation*}
  0 = -\int_\Omega z_-\Delta z\,dx = \int_\Omega |\nabla z_-|^2\,dx - \int_\Gamma \big(hz_-^2+sz_-\big) \,dS \geq 0.
\end{equation*}
Hence $z_-=0$ almost everywhere in $\Omega$ and $z\geq 0$.

We now verify \eqref{Lmonotoneh}. Let $z_1=L_{h_1}(s)$, $z_2=L_{h_2}(s)$. Then
\begin{equation*}
	0 =\Delta (z_1-z_2) \quad  \text{ in } \Omega, \qquad
	\frac{\partial (z_1-z_2)}{\partial n}  + h_1(z_1-z_2) =  z_2(h_2-h_1)\geq 0  \quad \text{ on }\Gamma. 
\end{equation*}
Then \eqref{Lnonneg} ensures that $z_1\geq z_2$.

We finally prove \eqref{Lpositive}. Therefore fix $h\geq 0$, $s\geq 0$, let $m:= \|h\|_{L^\infty(\Gamma)}$ and $\zeta :=L_m s$, i.e.
\begin{equation}
  \Delta \zeta =0 \quad\text{ in }\Omega,\qquad
  \frac{\partial \zeta}{\partial n}  + m\zeta = s\quad\text{ on }\Gamma.
  \label{eq:zeta}
\end{equation}
Then $z:=L_hs\geq L_ms=\zeta$ by \eqref{Lmonotoneh} and to prove \eqref{Lpositive} it suffices to show that there exists $\kappa>0$ with
\begin{equation}
  \zeta \geq \kappa \int_\Gamma s\,dS \,. \label{zeta}
\end{equation}

In the first step of the proof of this inequality we show that for any $K\ssubset \Omega$ there exists a constant $c_1=c_1(K)$ such that
\begin{equation}
  \zeta \geq \frac{c_1}{m}\int_\Gamma s\,dS \qquad\text{ in }K. \label{c1K}
\end{equation}
To prove this estimate consider for $x\in K$ the Green's function $G(x,y)$, i.e. the solution of
\begin{equation*}
  -\Delta G(x,\cdot)=\delta_x \quad\text{ in } \calD'(\Omega),\qquad
  G(x,\cdot)=0 \quad\text{ on }\Gamma.
\end{equation*}
By the positivity of $G$ we derive from the Hopf maximum principle that $\frac{\partial}{\partial n }G(x,y)<0$ for all $x\in K$, $y\in\Gamma$. Since $K\times\Gamma$ is compact and $\frac{\partial}{\partial n }G$ is continuous due to the smoothness of $\Gamma$ we even obtain the existence of $c_1=c_1(K,\Omega)>0$ such that
\begin{equation}
  \frac{\partial}{\partial n }G(x,y) \leq -c_1 \quad\text{ for all }
  x\in K, y\in \Gamma.
  \label{eqGF}
\end{equation}
The representation formula in terms of the Green's function implies that for all $x\in K$
\begin{align*}
  \zeta(x) &= -\int_{\Gamma} \frac{\partial}{\partial n }G(x,\cdot)\zeta\,dS \geq c_1 \int_{\Gamma }\zeta\,dS \,
  =\, \frac{c_1}{m}\int_\Gamma s\,dS\,,
\end{align*}
where the last equality follows from \eqref{eq:zeta}. This proves \eqref{c1K}.

We now use \eqref{c1K} to prove a bound from below for $\zeta$ in the whole set $\Omega$. By the smoothness of $\Gamma$ there is a uniform radius $\varrho>0$ such that for any $y\in\Gamma$ an interior sphere condition is satisfied for a ball $B(z_y,2\varrho) \subset\Omega$.
Moreover $\varrho$ can be chosen such that $\bigcup_{y\in\Gamma} B(z_y,\varrho)\ssubset \Omega\setminus K$ for some compact set $K\subset\Omega$ such that $\partial K$ is smooth and $K$ has nonempty interior.
Denote by $K_1$ the closure of $\bigcup_{y\in\Gamma} B(z_y,\varrho)$. Then in particular $K_1\ssubset \Omega\setminus K$.

We then consider the solution $\tilde\zeta$ of
\begin{equation*}
  \Delta\tilde\zeta = 0 \,\text{ in } \Omega\setminus K,\qquad
  \tilde\zeta = \zeta\,\text{ on }\partial K,\qquad
  \frac{\partial\tilde\zeta}{\partial n } +m \tilde\zeta =0  \,\text{ on }\Gamma.
\end{equation*}
As in the proof of \eqref{Lnonneg} we deduce that $\tilde\zeta\leq\zeta$ and by the maximum principle that $\tilde\zeta\geq 0$.

We claim that
\begin{equation}
 \tilde\zeta \geq \tilde \kappa \int_\Gamma s\,dS
 \quad\text{ in }\Omega\setminus K\,
 \label{eq:tkappa}
\end{equation}
holds. By \eqref{c1K} and $\zeta\geq \tilde\zeta$ this eventually  justifies \eqref{zeta}.

We consider the Green's function $\tilde G$ of $\Omega\setminus K$. Similar as above we obtain that there exists $\tilde c_2>0$ such that
\begin{equation}
  \frac{\partial}{\partial\nu}\tilde G(x,y) \leq -\tilde c_2 \quad\text{ for all }
  x\in K_1, y\in\partial \big(\Omega\setminus K\big),
  \label{eqGFtilde}
\end{equation}
where $\nu$ denotes the outer unit normal field of $\Omega\setminus K$. By the representation formula and the non-negativity of $\tilde \zeta$ we further deduce that for all $x\in K_1$
\begin{align}
  \tilde\zeta(x) = -\int_{\Gamma} \frac{\partial}{\partial \nu }\tilde G(x,\cdot)\tilde \zeta\,dS
  -\int_{\partial K} \frac{\partial}{\partial\nu}\tilde G(x,\cdot)\tilde \zeta\,dS
  &\geq  \tilde c_2\int_{\partial K}  \tilde\zeta\,dS
  \notag\\
  &=   \tilde c_2\int_{\partial K}  \zeta\,dS\,
  \geq\, c_2\frac{c_1}{m}\int_\Gamma s\,dS\,, \label{eq:bdtzeta}
\end{align}
where $c_2=\Ha^{n-1}(\partial K)\tilde c_2$ and where we have used \eqref{c1K} in the last step.

Moreover, the harmonic function $\tilde\zeta$ attains its minimum on $\partial K\cup\Gamma$.
If the minimum is attained on $\partial K$ we have $\tilde\zeta\geq c_1 \int_\Gamma s$ by \eqref{c1K} and conclude that \eqref{eq:tkappa} holds.
If on the other hand the minimum is attained in a point $y_0\in\partial \Omega$ the Hopf boundary point lemma (cf.~the proof of Lemma 3.4 in \cite{GiTr01})
imply that
\begin{equation*}
  \frac{\partial\tilde\zeta}{\partial n }(y_0) \leq -c_3\big(\min_{K_1} \tilde\zeta - \tilde\zeta(y_0)\big)
\end{equation*}
for some positive constant $c_3=c_3(\varrho)$. Using the Robin boundary condition for $\tilde \zeta$ we deduce that
\begin{equation*}
  m \tilde\zeta(y_0) \geq c_3 \big(\min_{K_1} \tilde\zeta - \tilde\zeta(y_0)\big),
\end{equation*}
hence
\begin{equation*}
 \inf_{\Omega\setminus K}\tilde\zeta \geq \frac{c_3}{m+c_3}\min_{K_1} \tilde\zeta \geq  \frac{c_1c_2c_3}{m(m+c_3)}\int_\Gamma s\,dS\,,
\end{equation*}
where we have used \eqref{eq:bdtzeta} in the last step.

This shows \eqref{eq:tkappa} and finishes the proof of \eqref{Lpositive}.
\end{proof}

\begin{proposition}\label{P.equivalence}
	Let $(u,w,\xi)$ be nonnegative functions with $\int_\Gamma u=m> 0$, the same regularity as in Theorem \ref{T.conv2} and with $0\leq\xi\leq 1$ almost everywhere in $\Gamma_T$. Then the following statements are equivalent:
	\begin{enumerate}
		\item $(u,w,\xi)$ satisfies \eqref{ob2,1}-\eqref{ob2,2}.
		\item $(u,\xi)$ satisfies
		\begin{align}
			\partial_t u &= \Delta u-a_4(1-g)\xi+\ell g L_{\ell g}\big(a_4(1-g)\xi\big), \quad u\xi=u \text{ a.e on } \Gamma_T, \label{eq75}
		\end{align}
    where $\ell=\frac{a_6}{D}$, and $w$ is determined by
    \begin{equation}
     w = \frac{\ell}{a_6}L_{\Chi \ell g} \big(a_4\Chi (1-g)\big)\qquad \mbox{ a.e.~on } \Gamma_T\,, \label{eq75a}
    \end{equation}
    with $\Chi=\Chi_{\{u>0\}}$.
	\end{enumerate}
\end{proposition}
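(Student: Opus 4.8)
The plan is to prove both implications by routing $w$ through the elliptic operator $L_h$ of Lemma \ref{L3.1} and using the obstacle structure to rewrite its boundary data. Set $z:=L_{\ell g}\big(a_4(1-g)\xi\big)$ and recall $\ell=a_6/D$ and $\Chi=\Chi_{\{u>0\}}$. The one nontrivial ingredient, which I would isolate as the core of the argument, is the identity
\begin{equation*}
  L_{\ell g}\big(a_4(1-g)\xi\big)=L_{\Chi\ell g}\big(a_4\Chi(1-g)\big);
\end{equation*}
everything else is linearity and a substitution. Throughout, $L_{\ell g}$ and $L_{\Chi\ell g}$ are well defined for almost every $t$, since $g\geq c_0/(c_0+a_5)>0$ by \eqref{assum}, and $|\{u(\cdot,t)>0\}|>0$ because $\int_\Gamma u(\cdot,t)\,dS=m>0$.

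The first step is to extract the obstacle relation on the coincidence set. Since $u$ has the regularity of Theorem \ref{T.conv2}, Stampacchia's Lemma applies exactly as in Remark \ref{2.Rxi} and yields $\partial_t u=\Delta u=0$ almost everywhere in $\{u=0\}$ for almost every $t$. Evaluating whichever $u$-equation is assumed (\eqref{ob2,1}, or the equation in \eqref{eq75}) on $\{u=0\}$ and using $a_6gw=\ell g z$ then forces
\begin{equation*}
  a_4(1-g)\xi=\ell g\,z\qquad\text{a.e.~in }\{u=0\},
\end{equation*}
while $u\xi=u$ gives $\xi=1$ a.e.\ on $\{u>0\}$. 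These two facts are the only information about $\xi$ that enters.

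For (1)$\Rightarrow$(2) I would divide the Robin condition \eqref{ob2,2} by $D$, rewrite it as $\partial_n w+\ell g\,w=\tfrac{\ell}{a_6}a_4(1-g)\xi$, and read off from \eqref{ob2,2a} and the linearity of $L_{\ell g}$ that $w=\tfrac{\ell}{a_6}z$; substituting $a_6gw=\ell g z$ into \eqref{ob2,1} gives the evolution equation in \eqref{eq75}. It then remains to identify $z$. Its defining boundary condition $\partial_n z+\ell g z=a_4(1-g)\xi$ reduces, on $\{u>0\}$ where $\xi=1$, to $\partial_n z+\Chi\ell g z=a_4\Chi(1-g)$, and on $\{u=0\}$ the obstacle relation turns it into the Neumann condition $\partial_n z=0=a_4\Chi(1-g)$. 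Thus $z$ solves $\Delta z=0$ in $\Omega$ with $\partial_n z+\Chi\ell g z=a_4\Chi(1-g)$ on $\Gamma$, so $z=L_{\Chi\ell g}\big(a_4\Chi(1-g)\big)$ and $w=\tfrac{\ell}{a_6}z$ is exactly \eqref{eq75a}.

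The converse (2)$\Rightarrow$(1) is the same computation read backwards: from the equation in \eqref{eq75} the Stampacchia step again gives $a_4(1-g)\xi=\ell g z$ on $\{u=0\}$, hence $z=L_{\Chi\ell g}\big(a_4\Chi(1-g)\big)=\tfrac{a_6}{\ell}w$ by the identity above, so the $w$ prescribed by \eqref{eq75a} satisfies \eqref{ob2,2a} and the Robin condition \eqref{ob2,2}, and \eqref{eq75} becomes \eqref{ob2,1}. The step I expect to be the real obstacle is precisely this identification $z=L_{\Chi\ell g}\big(a_4\Chi(1-g)\big)$: one cannot simply discard the values of $\xi$ on $\{u=0\}$, and the mechanism is that the obstacle relation converts the Robin datum into a homogeneous Neumann datum exactly on the set where the coefficient $\Chi\ell g$ vanishes, making the two mixed boundary value problems coincide. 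Uniqueness for $L_{\Chi\ell g}$, guaranteed by the generalized Poincar\'e inequality used in Lemma \ref{L3.1}, then closes the argument.
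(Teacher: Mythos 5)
Your proof is correct and takes essentially the same route as the paper's: Stampacchia's lemma gives $a_4(1-g)\xi = a_6 g w$ (equivalently $\ell g z$) a.e.\ on $\{u=0\}$, which converts the Robin datum for the harmonic function into the condition $D\frac{\partial w}{\partial n} + \Chi a_6 g w = a_4\Chi(1-g)$, whence \eqref{eq75a}. The paper compresses exactly this observation into two lines, while you additionally spell out both implications, the identification $w=\tfrac{\ell}{a_6}L_{\ell g}\big(a_4(1-g)\xi\big)$, and the well-definedness and uniqueness issues for $L_{\Chi\ell g}$ — all consistent with Lemma \ref{L3.1} and Remark \ref{2.Rxi}.
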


\begin{proof}
Due to Stampacchia's Lemma and the regularity of $u$ we have that $a_4 (1-g)\xi= a_6 g w$ holds almost everywhere in $\{u=0\}$. Hence
$D \frac{\partial w}{\partial n} + \Chi a_6 g = a_4 \Chi (1-g)$ and thus \eqref{eq75a} follows.
\end{proof}

\begin{remark}[Infinite cytosolic diffusion limit]
  In \eqref{eq75}, \eqref{eq75a} the parameter $D$ has been substituted by $\ell$. The limit $D\to\infty$ is equivalent to $\ell\to 0$. From the definition of the operator $L_h$ we observe that $z_\ell:= \ell L_{\ell g}(s)$ solves
  \begin{equation*}
  	0 =\Delta z_\ell \text{ in } \Omega, \qquad
  	\frac{\partial z_\ell}{\partial n}  + g\ell z_\ell = \ell s \text{ on }\Gamma.
  \end{equation*}
  We then obtain an estimate
  \begin{equation*}
    \int_\Omega |\nabla z_\ell|^2\,dx
    = \ell \int_\Gamma \big(sz_\ell - gz_\ell^2\big)\,dS
    \leq \ell \|s\|_{L^2(\Gamma)}\|z_\ell\|_{L^2(\Gamma)} - c\ell \|z_\ell\|_{L^2(\Gamma)}^2 \leq \frac{\ell}{2c}\|s\|_{L^2(\Gamma)}^2
  \end{equation*}
  and deduce that $\ell L_{\ell g}(s)$ becomes constant over $\Gamma$ with $\ell\to 0$. This observation shows that \eqref{eq75} reduces to \eqref{ob1,1} in the infinite cytosolic diffusion limit.
\end{remark}

\begin{remark}[Characterization of $\xi$, $w$] \label{rem:xiw}
  In the formulation of \eqref{eq75},\eqref{eq75a} we remark that $\xi$ and $w$ are already determined by $u$. In fact, we have
  \begin{equation}
   \xi(\cdot, t) =
   \begin{cases}
     1 &\quad\text{ a.e.~in } \{u(\cdot,t)>0\},\\
     \frac{a_6 w g}{a_4(1-g)}&\quad \text{a.e.~in } \{u(\cdot,t)=0 \}
   \end{cases}\,
   \label{eq75b}
 \end{equation}
 and $\xi$ is determined by $u,w$. By \eqref{eq75a} we see that $w$ is determined by $u$.

 Notice that the characterization \eqref{eq75a} is analogous to the second formula in \eqref{alpha}. We further remark that we have different representations for the function $w$ in the same manner that we have different characterizations of $\alpha$ (see \eqref{alphaA}). In particular we have also the following characterization in terms of an arbitrary measurable set $A\subset\Gamma$ containing $\{u>0\}$,
 \begin{equation}
   w = \frac{\ell}{a_6}L_{\Chi_A \ell g} \big(a_4(1-g)\Chi_A \xi\big)\qquad \mbox{ on } \Gamma\,. \label{eq75aa}
 \end{equation}
\end{remark}

From now on we set without loss of generality $a_4=a_6=1$.

\section{The reduced model for infinite cytosolic diffusion $D=\infty$ \label{3}}

\subsection{Uniqueness of solutions \label{3.1}}

\begin{theorem}\label{T.uniqueness1}
  Let $(u_1,\xi_1,\alpha_1)$ and $(u_2, \xi_2,\alpha_2)$ be two different solutions of \eqref{ob1,1}-\eqref{ob1,2} with $u_k\in\Xspace$, $\xi_k\in L^\infty(\Gamma_T)$, $\alpha_k\in L^\infty(0,T)$, $k=1,2$. Then
  \begin{equation*}
    t\mapsto \int_{\Gamma}  (u_1-u_2)_+(\cdot,t)\,dS \text{ is decreasing on } [0,T].
  \end{equation*}
	In particular, given $u_0 \in L^2(\Gamma)$ with  $u_0 \geq 0$, there exists at most one solution $(u,\xi,\alpha)$ of \eqref{ob1,1}-\eqref{ob1,3} with $u\in\Xspace$, $\xi\in L^\infty(\Gamma_T)$, $\alpha\in L^\infty(0,T)$.
\end{theorem}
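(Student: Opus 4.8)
The plan is to prove the sharper pointwise-in-time estimate $\frac{d}{dt}\int_\Gamma (u_1-u_2)_+\,dS\le 0$ for a.e.\ $t\in(0,T)$; the uniqueness claim then follows at once, since for equal initial data $\int_\Gamma(u_1-u_2)_+(\cdot,0)\,dS=0$ forces $u_1\le u_2$, and interchanging the roles of $u_1$ and $u_2$ gives $u_1=u_2$ (whence $\xi$ and $\alpha$ agree by \eqref{xi} and \eqref{alpha}). Write $w:=u_1-u_2$. First I would record that each $u_k$ has the interior parabolic regularity \eqref{eq2regu} — the right-hand side of \eqref{ob1,1} is bounded, so parabolic $L^p$-theory applies — so that Stampacchia's lemma yields, exactly as in Remark \ref{2.Rxi}, the pointwise representation \eqref{xi} of $\xi_k$, the obstacle inequality \eqref{eq:cond-u=0} (i.e. $\alpha_k g\le 1-g$ a.e.\ on $\{u_k=0\}$), and the identity $\int_\Gamma\big(\alpha_k g-(1-g)\xi_k\big)\,dS=0$ that comes from \eqref{alpha}.

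\textbf{Differentiating the positive part.} To differentiate the $L^1$-norm of $w_+$ I would test the equation solved by $w$, namely $\partial_t w-\Delta w=-(1-g)(\xi_1-\xi_2)+(\alpha_1-\alpha_2)g$, against a smooth nondecreasing approximation $\beta_\delta$ of $\mathbf 1_{(0,\infty)}$ that vanishes on $(-\infty,0]$. On the closed surface $\Gamma$ there are no boundary terms, so the diffusion contribution has the favourable sign $-\int_\Gamma\Delta w\,\beta_\delta(w)\,dS=\int_\Gamma\beta_\delta'(w)|\nabla w|^2\,dS\ge 0$; letting $\delta\to 0$ therefore gives $\frac{d}{dt}\int_\Gamma w_+\,dS\le R(t)$, where $R(t):=\int_{\{w>0\}}\big[-(1-g)(\xi_1-\xi_2)+(\alpha_1-\alpha_2)g\big]\,dS$. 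It remains to show $R(t)\le 0$ for a.e.\ $t$.

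\textbf{Two representations of $R$ and a case split (the key step).} The heart of the matter is to write $R$ in two equivalent ways and to use whichever one matches the sign of $\alpha_1-\alpha_2$. On $\{w>0\}$ one has $u_1>0$, hence $\xi_1=1$; splitting $\{w>0\}$ into $A_1:=\{u_1>u_2>0\}$ (where $\xi_2=1$) and $A_0:=\{u_1>u_2=0\}$ (where $\xi_2=\tfrac{\alpha_2 g}{1-g}$ by \eqref{xi}) gives
\[
R=(\alpha_1-\alpha_2)\int_{A_1}g\,dS+\int_{A_0}\big(\alpha_1 g-(1-g)\big)\,dS.
\]
Since $\int_\Gamma\big(\alpha_k g-(1-g)\xi_k\big)\,dS=0$ for $k=1,2$, the integrand of $R$ has zero mean over $\Gamma$, so I may instead write
\[
R=\int_{\{w\le 0\}}\big[(\alpha_2-\alpha_1)g-(1-g)(\xi_2-\xi_1)\big]\,dS,
\]
and split $\{w\le 0\}$ into $C:=\{0<u_1\le u_2\}$ (where $\xi_1=\xi_2=1$), $B_0:=\{u_1=0<u_2\}$ (where $\xi_1=\tfrac{\alpha_1 g}{1-g}$), and $\{u_1=u_2=0\}$ (which contributes nothing), to obtain
\[
R=(\alpha_2-\alpha_1)\int_{C}g\,dS+\int_{B_0}\big(\alpha_2 g-(1-g)\big)\,dS.
\]
If $\alpha_1(t)\ge\alpha_2(t)$ I use the second form: its first term is $\le 0$, and on $B_0\subset\{u_1=0\}$ the inequality \eqref{eq:cond-u=0} gives $1-g\ge\alpha_1 g\ge\alpha_2 g$, so the second integrand is $\le 0$ as well. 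If $\alpha_1(t)\le\alpha_2(t)$ I use the first form: its first term is $\le 0$, and on $A_0\subset\{u_2=0\}$ one has $1-g\ge\alpha_2 g\ge\alpha_1 g$, so the second integrand is again $\le 0$. In both cases $R(t)\le 0$.

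\textbf{Where the difficulty lies.} I expect the delicate points to be analytic rather than algebraic. Justifying the differentiation of $\int_\Gamma w_+\,dS$ and the limit $\delta\to 0$ needs regularity of $u_k$; away from $t=0$ this is \eqref{eq2regu}, but near $t=0$ only $u_k\in\Xspace\embeds C^0([0,T];L^2(\Gamma))$ is available, so I would prove monotonicity on each $[\tau,T]$ and let $\tau\searrow 0$, using continuity of $t\mapsto\int_\Gamma w_+(\cdot,t)\,dS$. Likewise, the representation \eqref{xi}, the inequality \eqref{eq:cond-u=0}, and the zero-mean identity must be secured for the abstract solutions in the hypotheses, which is precisely why I would first establish the interior regularity and then invoke Stampacchia's lemma as in Remark \ref{2.Rxi}. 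The cancellation in the previous paragraph is robust; it is this analytic bookkeeping, together with the sign-dependent choice between the two forms of $R$, that carries the proof.
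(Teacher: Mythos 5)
Your proposal is correct, and its preparation coincides with the paper's: interior $W^{2,1}_p$ regularity from parabolic $L^p$ theory, Stampacchia's lemma yielding \eqref{xi} and \eqref{eq:cond-u=0}, and the differential inequality $\frac{d}{dt}\int_\Gamma(u_1-u_2)_+\,dS\le R(t)$, which is exactly \eqref{star} (the paper obtains the sign of the diffusion term via Kato's inequality rather than by smoothing the Heaviside function; the two devices are interchangeable here, and your handling of $t=0$ by proving monotonicity on $[\tau,T]$ and letting $\tau\searrow 0$ also matches the paper). The key step, however, is genuinely different. The paper never splits into cases: from \eqref{alpha} and the identity \eqref{neweq8} on $\{u_1=u_2=0\}$ it derives $(\alpha_1-\alpha_2)\int_\Gamma \Chi g\,dS=\int_\Gamma \Chi(1-g)(\xi_1-\xi_2)\,dS$ with $\Chi=\Chi_{\{u_1+u_2>0\}}$ (equation \eqref{neweq9}), substitutes this expression for $\alpha_1-\alpha_2$ into \eqref{star}, and rewrites the result as a sum of two products of integrals, each manifestly nonpositive because $\xi_k\le 1$ and $\xi_k=1$ on the relevant positivity sets (see \eqref{mon.for}--\eqref{mon.for.b}). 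You instead exploit that the integrand of $R$ has zero mean over $\Gamma$ (again a consequence of \eqref{alpha}), flip the integral to the complementary set $\{u_1\le u_2\}$, and choose between the two resulting representations according to the sign of $\alpha_1(t)-\alpha_2(t)$, closing each case with the obstacle inequality \eqref{eq:cond-u=0}. Both routes consume the same structural facts, but they buy different things: your case split is pointwise, arguably more elementary, and avoids dividing by $\int_\Gamma\Chi g\,dS$ (which the paper tacitly needs to be positive---harmless, since it vanishes only in the trivial case $u_1=u_2=0$ a.e.). The paper's product decomposition, on the other hand, is the one that survives the passage to finite cytosolic diffusion: in Theorem \ref{T.uniqueness2} the scalar multiplier $\alpha(t)$ is replaced by the nonlocal positive operator $L_{\Chi g}$, whose output is a function on $\Gamma$ with no single scalar sign to case-split on, and there the same two-term decomposition carries over essentially verbatim, whereas your dichotomy has no direct analogue.
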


\begin{proof}
	Any solution satisfies in addition $u \in W^{2,1}_p(\Gamma\times (\delta,T))$
	for any $\delta>0$, $1 \le p < \infty$.

  By the regularity of $u_1,u_2$ the function $(u_1-u_2)_+$ belongs to $W^{1,p}(\Gamma_T)$ for any $1\leq p\leq \infty$ and
  \begin{equation}
    \partial_t (u_1-u_2)_+= \Chi_{\{u_1>u_2\}}\partial_t (u_1-u_2). \label{eq:1001}
  \end{equation}
  In particular the weak derivative $\frac{d}{dt} \int_{\Gamma} (u_1-u_2)_+\,dS$ exists as an $L^p(0,T)$ function and hence almost everywhere in $(0,T)$.

  Furthermore, for almost all $t\in (0,T)$ we have $(u_1-u_2)(\cdot,t)\in W^{2,p}(\Gamma)$ and Kato's inequality \cite{Kato72}
  implies that $\Chi_{\{u_1>u_2\}}\Delta (u_1-u_2)\leq \Delta (u_1-u_2)_+$ in the sense of distributions.
  We therefore obtain, with $\one_\Gamma$ denoting the constant function with value $1$ on $\Gamma$,
  \begin{equation}
    \int_{\{u_1>u_2\}} \Delta (u_1-u_2)
    \leq  \langle \Delta (u_1-u_2)_+,\one_\Gamma\rangle  = 0.
    \label{eq:1002}
  \end{equation}
  This justifies the following computations for almost all $t\in (0,T)$. We drop in the following in most places the argument $t$.

  Integrating the equation for the difference $u_1-u_2$ over $\{u_1>u_2\}$ and using \eqref{eq:1001}, \eqref{eq:1002} yields
	\begin{align}
		\frac{d}{dt} \int_{\Gamma} (u_1-u_2)_+\,dS
    &=\int_{\{u_1>u_2\}}\partial_t(u_1-u_2)\,dS
    \notag\\
		&=\int_{\{u_1>u_2\}} \Delta(u_1-u_2)\,dS-\int_{\{u_1>u_2\}} (1-g)(\xi_1-\xi_2)\,dS
    \notag\\
		&\quad +\int_{\{u_1>u_2\}} g(\alpha_1 -\alpha_2)\,dS \notag\\
    &\leq -\int_\Gamma \Chi_+
		(1-g)(\xi_1-\xi_2)\,dS +(\alpha_1 -\alpha_2)\int_\Gamma \Chi_+  g\,dS\,,
    \label{star}
	\end{align}
  where we let $\Chi_+:=\Chi_{\{u_1>u_2\}}$.

	We next rewrite the difference $\alpha_1-\alpha_2$.
  Almost everywhere in $\{u_1=0=u_2\}$ by Stampacchia's Lemma it holds
	\begin{equation*}
		\Delta u_1=\Delta u_2=0\quad  \mbox{ and } \quad \partial_t u_1=\partial_t u_2=0
	\end{equation*}
	which yields due to \eqref{ob1,1},
	\begin{equation}
		(\alpha_1-\alpha_2)g=(1-g)(\xi_1-\xi_2) \label{neweq8}
	\end{equation}
	almost everywhere in $\{u_1=u_2=0\}$.
	We use the notation $\Chi:=\Chi_{\{u_1+u_2>0\}}$ and derive thanks to \eqref{alpha} that
	\begin{align}
		(\alpha_1-\alpha_2) \int_{\Gamma}g\,dS
		&= \int_\Gamma \Chi  (1-g)(\xi_1-\xi_2)\,dS+ (\alpha_1-\alpha_2)\int_\Gamma (1-\Chi) g\,dS \nonumber
	\end{align}
	and thus
	\begin{align}
		(\alpha_1-\alpha_2) \int_\Gamma \Chi   g\,dS
		&=\int_\Gamma \Chi  (1-g)(\xi_1-\xi_2)\,dS \label{neweq9}
	\end{align}

	Plugging \eqref{neweq9} into \eqref{star} we find
	\begin{align}
		\frac{d}{dt} \int_{\Gamma} (u_1-u_2)_+\,dS  \le &\frac{1}{ \int_\Gamma \Chi   g\,dS}
		\Big(-\int_\Gamma \Chi   g\,dS  \int_\Gamma \Chi_+  (1-g)(\xi_1-\xi_2)\,dS \nonumber \\
    &\qquad +\int_\Gamma \Chi  (1-g)(\xi_1-\xi_2)\,dS
		\int_\Gamma \Chi_+  g\,dS \Big)\,. \label{mon.for}
	\end{align}
  For the term on the right-hand side in brackets we further obtain
	\begin{align}
		\Big (\dots \Big)=&-\int_\Gamma (\Chi-\Chi_+)g\,dS \int_\Gamma \Chi_+(1-g)(1-\xi_2)\,dS \nonumber \\
		& -\int_\Gamma \Chi_+ g\,dS \int_\Gamma (\Chi-\Chi_+)(1-g)(1-\xi_1)\,dS\leq 0\,,
    \label{mon.for.b}
	\end{align}
	where we have used that $\Chi-\Chi_+\geq 0$, that $\xi_1=1$ in $\{\Chi_+>0\}$ and that $\xi_2=1$ in $\{\Chi-\Chi_+>0\}$.

  This shows that $t\mapsto \int_{\Gamma}  (u_1-u_2)_+(\cdot,t)\,dS$ is decreasing in time.
	Moreover, since $u_1,u_2\in C^0([0,T];L^2(\Gamma))$ we deduce that $t\mapsto  \int_{\Gamma}  (u_1-u_2)_+(\cdot,t)\,dS$ is continuous on $[0,T]$, and in particular vanishes at $t=0$. This proves that $(u_1-u_2)_+=0$ on $\Gamma_T$, hence $u_1\leq u_2$.
	By the symmetry of the argument, we also have $u_2\leq u_1$, which gives the desired contraction property and the uniqueness for $u$. The uniqueness of $\xi$ and $\alpha$ then easily follows from \eqref{alpha}
	and \eqref{xi}.
\end{proof}

\subsection{Global stability of steady states \label{3.2}}
The results of the previous sections show that for any given initial data with mass $m>0$ there exists a unique solution of \eqref{ob1,1}-\eqref{ob1,3} for all times $t\geq 0$.
We now consider the case that $c=c(x)$ does not depend on time, hence $g=g(x)$ is time-independent, too. The existence and uniqueness of stationary states for any prescribed mass was proved in \cite{NiRV20}.
The goal of this section is to prove that $(u,\xi,\alpha)(\cdot,t)$ converge with $t\to\infty$ to the unique steady state $(\stern  u,\stern\xi,\stern\alpha)$ with the same mass $m$.

In the following we consider $\Gamma_1=\Gamma \times (0,1)$ and denote by $\shift_t u:\Gamma_1\to\R$ the function defined by $(\shift_tu)(x,s):=u(x,s+t)$. The functions  $\shift_t\xi, \shift_t\alpha$ are defined analogously. We denote the constant function with value $(\stern  u,\stern\xi,\stern\alpha)$ on $(0,1)$ again by $(\stern  u,\stern\xi,\stern\alpha)$.
\begin{theorem}\label{T.stability1}
 Consider the unique solution $(u,\xi,\alpha)$ of \eqref{ob1,1}-\eqref{ob1,3} and the stationary solution $(\stern  u,\stern\xi,\stern\alpha)$ with the same mass, that is the unique solution of
	\begin{align}
		-\Delta \stern u &=-(1-g)\stern\xi + \stern\alpha g\,, \qquad
		\stern u \geq 0\,,\quad
		0\leq\stern \xi\leq 1\,, \quad \stern \xi \stern u = \stern u \,, \label{eq:steady1}\\
		\int_{\Gamma} \stern u \,dS &= m\,.\label{eq:steady2}
	\end{align}
 Then $(u,\xi,\alpha)$ converges with $t\to\infty$ to $(\stern  u,\stern\xi,\stern\alpha)$, more precisely
 \begin{equation}
   \shift_t u \weakto \stern u \text{ in }W^{2,1}_p(\Gamma_1)\,,\quad
   \shift_t\xi \weakstarto \stern\xi \text{ in }L^\infty(\Gamma_1)\,,\quad
   \shift_t\alpha \weakstarto \stern\alpha \text{ in }L^\infty(0,1)\,.
   \label{eq:convergence1}
 \end{equation}

 Moreover, $\shift_tu$ converges with $t\to\infty$ uniformly on $\Gamma$ to $\stern u$.
\end{theorem}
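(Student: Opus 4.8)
The plan is to exploit the $L^1$-contraction property from Theorem \ref{T.uniqueness1} as a Lyapunov-type functional and combine it with the uniform parabolic regularity from Theorem \ref{T.conv1} to extract convergent subsequences along time shifts. First I would fix the mass-$m$ solution $(u,\xi,\alpha)$ and consider the family of time-shifts $\shift_t u$ on the fixed cylinder $\Gamma_1=\Gamma\times(0,1)$. Because the equation is autonomous when $g=g(x)$ is time-independent, each $\shift_t u$ is again a solution of \eqref{ob1,1}-\eqref{ob1,2} (on a shifted time interval) with the same mass $m$, and by the estimate \eqref{eq2regu} the shifts are uniformly bounded in $W^{2,1}_p(\Gamma_1)$ for every $1\leq p<\infty$ (the bound is uniform in $t$ since, for $t\geq 1$ say, the starting time stays bounded away from $0$). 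Hence along any sequence $t_n\to\infty$ we may extract a subsequence with $\shift_{t_n}u\weakto u_\infty$ in $W^{2,1}_p(\Gamma_1)$, $\shift_{t_n}\xi\weakstarto\xi_\infty$ in $L^\infty(\Gamma_1)$, and $\shift_{t_n}\alpha\weakstarto\alpha_\infty$ in $L^\infty(0,1)$, and by compact embedding $\shift_{t_n}u\to u_\infty$ strongly in, say, $C^0(\Gamma_1)$.

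Second I would show that any such limit $u_\infty$ is time-independent and is a stationary solution. The key input is that the $L^1$-contraction of Theorem \ref{T.uniqueness1}, applied to the two solutions $u$ and $\stern u$, shows that $t\mapsto\int_\Gamma (u-\stern u)_+(\cdot,t)\,dS$ and (by symmetry) $t\mapsto\int_\Gamma(\stern u-u)_+(\cdot,t)\,dS$ are both decreasing and nonnegative, hence each converges as $t\to\infty$; consequently $t\mapsto\|u(\cdot,t)-\stern u\|_{L^1(\Gamma)}$ converges to a limit $\lambda\geq 0$. Along the shifts this forces $\|\shift_{t}u(\cdot,s)-\stern u\|_{L^1(\Gamma)}\to\lambda$ uniformly in $s\in(0,1)$, so the limit $u_\infty(\cdot,s)$ satisfies $\|u_\infty(\cdot,s)-\stern u\|_{L^1(\Gamma)}=\lambda$ for a.e.\ $s$, independent of $s$. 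To conclude that $u_\infty$ is genuinely stationary I would test the contraction estimate over a shifted window: the decrease of $\int_\Gamma(u-\stern u)_+\,dS$ over $[t_n,t_n+1]$ tends to $0$, which upon passing to the limit forces the right-hand side of the monotonicity formula \eqref{mon.for}-\eqref{mon.for.b} (evaluated for the pair $u_\infty,\stern u$) to vanish identically; passing to the limit in the weak formulation of \eqref{ob1,1} then yields that $u_\infty$ solves the stationary system \eqref{eq:steady1} with mass $m$.

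Third, by the uniqueness of the steady state with mass $m$ established in \cite{NiRV20}, I conclude $u_\infty=\stern u$, hence $\lambda=0$ and the limit is independent of the chosen sequence $t_n$. A standard subsequence argument then upgrades this to convergence of the full family as $t\to\infty$, giving \eqref{eq:convergence1}; the accompanying convergences of $\xi$ and $\alpha$ follow from \eqref{xi} and \eqref{alpha} together with the strong convergence of $u$. Finally, the uniform convergence $\shift_t u\to\stern u$ on $\Gamma$ follows from the compact embedding $W^{2,1}_p(\Gamma_1)\embeds C^0(\Gamma_1)$ for $p$ large, applied to the (now fully convergent) family.

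I expect the main obstacle to be the step asserting that the $L^1$-limit $\lambda$ must be zero, i.e.\ ruling out a nontrivial $s$-independent limit $u_\infty\neq\stern u$ with constant positive $L^1$-distance to $\stern u$. The difficulty is that the monotonicity formula \eqref{mon.for}-\eqref{mon.for.b} only gives that the distance is decreasing, not strictly decreasing, so a priori a limit configuration could saturate the inequality. The resolution hinges on analyzing the equality case in \eqref{mon.for.b}: equality forces $\{u_\infty>0\}$ and $\{\stern u>0\}$ to agree up to a null set on $\Gamma$, whence $\alpha_\infty=\stern\alpha$ by \eqref{alpha} and $u_\infty,\stern u$ solve the \emph{same} linear elliptic problem on the common positivity set with the same Dirichlet data on its boundary, forcing $u_\infty=\stern u$. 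Making this equality analysis rigorous — in particular handling the possible discontinuous-in-time behavior of the positivity set flagged in the introduction — is the delicate point, and is presumably where the autonomous structure and the uniqueness result from \cite{NiRV20} must be combined carefully.
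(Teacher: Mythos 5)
Your skeleton coincides with the paper's own proof: time shifts on $\Gamma_1$, uniform $W^{2,1}_p$ bounds and compact embedding, the $L^1$-contraction of Theorem \ref{T.uniqueness1} as a Lyapunov functional giving that $t\mapsto\int_\Gamma(u_\infty(\cdot,t)-\stern u)_+\,dS$ is constant, and hence that the dissipation in \eqref{mon.for}--\eqref{mon.for.b} vanishes for the pair $(u_\infty,\stern u)$. Up to there the proposal is sound. The gap is precisely in the step you yourself flag as the main obstacle, and your proposed resolution of it does not work, for two reasons. First, equality in \eqref{mon.for.b} does \emph{not} force $\{u_\infty>0\}$ and $\{\stern u>0\}$ to agree up to a null set. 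Since $\int_\Gamma(\Chi-\Chi_+)g\,dS>0$ (otherwise $u_\infty>\stern u$ a.e.\ on $\{\stern u>0\}$, contradicting equal masses), the vanishing of the two products only yields $\stern\xi=1$ a.e.\ on $\{u_\infty>\stern u\}$ and, in the nontrivial case, $\xi_\infty=1$ a.e.\ on $\{\stern u>u_\infty\}$; on $\{\stern u=0<u_\infty\}$ this means, via \eqref{xi}, that the constraint \eqref{eq:cond-u=0} is saturated ($\stern\alpha g=1-g$) there, which is perfectly admissible and does not make that set null. Second, even granting coincidence of the positivity sets, so that $\alpha_\infty=\stern\alpha$ by \eqref{alpha} and $\xi_\infty=\stern\xi$ by \eqref{xi}, your concluding step is circular: you treat $u_\infty$ and $\stern u$ as solving "the same linear elliptic problem", but stationarity of $u_\infty$ is exactly what has not been proved. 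Your earlier sentence that "passing to the limit in the weak formulation of \eqref{ob1,1} then yields that $u_\infty$ solves the stationary system" is a non sequitur: the limit of the weak formulation is the \emph{evolution} equation on $\Gamma_1$, and vanishing dissipation does not imply $\partial_t u_\infty=0$. At this stage one only knows that $v:=u_\infty-\stern u$ solves the heat equation $\partial_t v=\Delta v$ with $\int_\Gamma v(\cdot,t)\,dS=0$, and such solutions exist in abundance (any decaying nonconstant eigenfunction mode), so no elliptic uniqueness argument can apply.

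The paper closes this gap with two ingredients absent from your proposal. It first proves $\alpha_\infty=\stern\alpha$ by a contradiction argument that leans on the mass constraint rather than on positivity sets: if $\alpha_\infty(t)<\stern\alpha$, the integrand in \eqref{eq:dissi3} is bounded below by $(\stern\alpha-\alpha_\infty(t))\inf_\Gamma g>0$ on $\{u_\infty(\cdot,t)>\stern u\}$, so this set is null, whence $u_\infty(\cdot,t)\leq\stern u$, equal mass gives $u_\infty(\cdot,t)=\stern u$, and then the second formula in \eqref{alpha} gives $\alpha_\infty(t)=\stern\alpha$, a contradiction; the symmetric argument gives $\alpha_\infty\leq\stern\alpha$. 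From this one deduces $\xi_\infty=\stern\xi$, so that $v=u_\infty-\stern u$ solves the heat equation with zero mean and time-constant $\int_\Gamma v_+\,dS$. The decisive tool is then Lemma \ref{lem:heatequation}: such a $v$ must vanish identically. Its proof is not a consequence of the monotonicity formula at all; it tests the equation with the solution of the backward (dual) heat equation with terminal datum $\Chi_{\{v(\cdot,t_2)>0\}}$ and uses the strong maximum principle to produce a strict contraction of $\int_\Gamma v_+\,dS$ unless $\{v(\cdot,t_2)>0\}$ is null. Without this lemma, or some substitute for it, your argument cannot rule out the scenario $\lambda>0$ of a drifting heat-equation profile at constant $L^1$-distance from $\stern u$.
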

\begin{proof}
	We consider for $k\in \N$ the functions
	\begin{align*}
		&(u_k,\xi_k,\alpha_k)\in W^{2,1}_p(\Gamma_1)\times L^\infty(\Gamma_1)\times L^\infty(0,1),\qquad
		u_k = \shift_k u,\,\xi_k = \shift_k \xi,\,\alpha_k = \shift_k \alpha\,.
	\end{align*}
	Then these triples are all solutions of \eqref{ob1,1}, \eqref{ob1,2} on $\Gamma_1$ and we deduce from Theorem \ref{T.bounds1} and \eqref{eq2regu} that they are uniformly bounded in
	$W^{2,1}_p(\Gamma_1)\times L^\infty(\Gamma_1)\times L^\infty(0,1)$ for all $p \in [1,\infty)$.
	Hence, there exists $(u_\infty,\xi_\infty,\alpha_\infty)\in W^{2,1}_p(\Gamma_1)\times L^\infty(\Gamma_1)\times L^\infty(0,1)$
	such that for some subsequence $k\to\infty$
	\begin{equation}
		u_k \weakto u_\infty \text{ in } W^{2,1}_p(\Gamma_1)\,,\quad
		\xi_k\weakstarto \xi_\infty \text{ in }L^\infty(\Gamma_1)\,,\quad
		\alpha_k\weakstarto \alpha_\infty \text{ in }L^\infty(0,1).
		\label{eq:convergence}
	\end{equation}
  By the compact embedding $W^{2,1}_p(\Gamma_1)\embeds C^{\alpha,\alpha/2}(\Gamma\times [0,1])\overset{\text{cpct}}{\embeds} C^0(\Gamma\times [0,1])$
  for $p>\frac{5}{2}$, $0<\alpha\leq 2-\frac{5}{p}$ (see \cite[Theorem 1.4.1]{WuYW06}) we deduce that $\lim_{t\to\infty} u(\cdot,t)=u_\infty$ in $C^0(\Gamma)$.

	We therefore can pass in \eqref{ob1,1}, \eqref{ob1,2} (for $u$ replaced by $u_k$) to the limit and deduce that $(u_\infty,\xi_\infty,\alpha_\infty)$ is
	again a solution of \eqref{ob1,1}, \eqref{ob1,2} on $\Gamma_1$.
	We would like to show that this solution is time-independent and coincides with $(\stern u,\stern \xi,\stern \alpha)$.

	Exactly as in \eqref{star}-\eqref{mon.for.b} we can conclude
	\begin{align}
		\frac{d}{dt}\int_\Gamma (u-\stern u)_+\,dS
		&\leq -\int_{\{u>\stern u\}} \big((1-g)(\xi-\stern\xi) - (\alpha-\stern\alpha)g\big)\,dS \leq 0
		\label{eq:dissi01}
	\end{align}
	and thus $t\mapsto \int_\Gamma (u-\stern u)_+(\cdot,t)$ is decreasing.

	By \eqref{eq:convergence} and the monotonicity property \eqref{eq:dissi01}  we deduce that $\lim_{T\to\infty} \int_\Gamma (u-\stern u)_+(T,\cdot)\,dS$ exists and that for any $t\in (0,1)$
	\begin{equation}
		\int_\Gamma (u_\infty(\cdot,t)-\stern u)_+\,dS = \lim_{k\to\infty}\int_\Gamma (u_k(\cdot,t)-\stern u)_+\,dS
		= \lim_{T\to\infty}\int_\Gamma (u(\cdot,T)-\stern u)_+\,dS
    \label{eq:tindep}
	\end{equation}
	is independent of $t$.
	Since $(u_\infty,\xi_\infty,\alpha_\infty)$ and $(\stern u,\stern \xi,\stern \alpha)$ are both solutions of \eqref{ob1,1}, \eqref{ob1,2} on $\Gamma_1$ we deduce again,
	as in \eqref{eq:dissi01} that
	\begin{align}
	0 = \frac{d}{dt}\int_\Gamma (u_\infty-\stern u)_+\,dS \leq - \int_{\{u_{\infty} > \stern u\}} \big( (1-g)(1-\stern \xi) - (a_{\infty}-\stern \alpha )\big)\,dS \leq 0
	\label{eq:dissi3}
	\end{align}
	and hence the right-hand side must be zero for almost any $t\in (0,1)$.

	Now assume that there exists $t\in (0,1)$ such that $\alpha_\infty(t)<\stern\alpha$ and such that \eqref{eq:dissi3} holds. Then we deduce that $\{u_\infty(\cdot,t)>\stern u\}$ has measure zero and
	$u_\infty(\cdot,t)\leq\stern u$ almost everywhere, which implies by the equal mass condition that $u_\infty(\cdot,t)=\stern u$. But this further induces $\alpha_\infty(t)=\stern\alpha$ by the second equality in \eqref{alpha}, a contradiction.
	Hence $\alpha_\infty(t)\geq\stern\alpha$ for almost all $t\in (0,1)$.

	In a completely analogous way we can derive that $\alpha_\infty(t)\leq\stern\alpha$ for almost all $t\in (0,1)$, which finally implies $\alpha_\infty=\stern\alpha$ almost everywhere.

	Using this information in \eqref{eq:dissi3} and the analogous inequality for $\frac{d}{dt}\int_\Gamma (\stern u- u_\infty)_+\,dS$  we deduce that $\xi_\infty(\cdot,t)=\stern\xi=1$ in $\{u_\infty(\cdot,t)\neq \stern u\}$.
	In addition they also are equal in $\{u_\infty(\cdot,t)=\stern u>0\}$ and by \eqref{xi} also in $\{u_\infty(\cdot,t)=\stern u=0\}$. Hence $\xi_\infty=\stern\xi$ almost everywhere.

	It therefore remains to prove that $(\xi_\infty,\alpha_\infty)=(\stern\xi,\stern\alpha)$ implies  $u_\infty=\stern  u$.
	This follows from the following lemma, applied to $u_\infty-\stern u$.
\end{proof}

\begin{lemma}\label{lem:heatequation}
  Given $u\in W_{2}^{2,1}(\Gamma_{T})$  with $\partial_{t}u-\Delta u=0$ almost everywhere and
  \begin{equation}
    \int_{\Gamma}u(\cdot,t)\,dS=0\,, \qquad \frac{d}{dt}\int_{\Gamma} u(\cdot,t)_+\,dS=0 \qquad \mbox{ for a.a. } t \in (0,T)
    \label{A2}
  \end{equation}
  it follows that $u\equiv0$.
\end{lemma}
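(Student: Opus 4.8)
The plan is to show that the positive and negative parts of $u$ are each (distributional) solutions of the heat equation on the closed surface $\Gamma$, and then to reach a contradiction from the strict positivity of the heat kernel on a connected manifold. First I would record that, since $\int_\Gamma u(\cdot,t)\,dS=0$, the masses of the two parts coincide, $\int_\Gamma u_+(\cdot,t)\,dS=\int_\Gamma u_-(\cdot,t)\,dS$; by the second identity in \eqref{A2} the left-hand side is constant in $t$, hence so is $\int_\Gamma u_-(\cdot,t)\,dS$. Exactly as in \eqref{eq:1001}--\eqref{eq:1002}, using $\partial_t u=\Delta u$ and the regularity $u(\cdot,t)\in W^{2,p}(\Gamma)$, I would compute for almost every $t$
\begin{equation*}
  0 = \frac{d}{dt}\int_\Gamma u_+(\cdot,t)\,dS = \int_{\{u>0\}}\partial_t u\,dS = \int_{\{u>0\}}\Delta u\,dS,
\end{equation*}
and symmetrically $\int_{\{u<0\}}\Delta u\,dS=0$.

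The key algebraic step is to upgrade these integral identities to the distributional equation $\Delta u_+=\Chi_{\{u>0\}}\Delta u$. Kato's inequality (already invoked in the excerpt) gives that $\mu_t:=\Delta u_+(\cdot,t)-\Chi_{\{u>0\}}\Delta u(\cdot,t)$ is a nonnegative distribution, supported on the free boundary. Testing against $\one_\Gamma$ and using that $\langle\Delta u_+,\one_\Gamma\rangle=-\int_\Gamma\nabla u_+\cdot\nabla\one_\Gamma\,dS=0$ on the closed surface $\Gamma$, together with $\int_{\{u>0\}}\Delta u\,dS=0$, shows that $\mu_t$ has total mass zero; being nonnegative, it vanishes. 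Combining $\Delta u_+=\Chi_{\{u>0\}}\Delta u$ with $\partial_t u_+=\Chi_{\{u>0\}}\partial_t u$ (cf. \eqref{eq:1001}) and $\partial_t u=\Delta u$ yields $\partial_t u_+-\Delta u_+=0$ in the distributional sense; the identical argument applied to $u_-=(-u)_+$ gives $\partial_t u_--\Delta u_-=0$.

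Finally I would conclude by parabolic regularity and a maximum-principle argument. A nonnegative distributional solution of the heat equation is smooth for $t>0$, so $u_+$ is a genuine nonnegative solution on $\Gamma\times(0,T)$ and admits the heat-kernel representation $u_+(\cdot,t)=e^{(t-t_1)\Delta}u_+(\cdot,t_1)$. Suppose $u\not\equiv0$; then there is $t_1\in(0,T)$ with $u(\cdot,t_1)\not\equiv0$, and since $u(\cdot,t_1)$ has zero mean it must change sign, so both $u_+(\cdot,t_1)$ and $u_-(\cdot,t_1)$ are nontrivial. Strict positivity of the heat kernel on the connected closed manifold $\Gamma$ forces $u_+(x,t)>0$ for all $x\in\Gamma$ and $t>t_1$, whence $u_-(\cdot,t)\equiv0$ and $\int_\Gamma u_-(\cdot,t)\,dS=0$; but this contradicts $\int_\Gamma u_-(\cdot,t)\,dS=\int_\Gamma u_+(\cdot,t)\,dS>0$. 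Hence $u\equiv0$. I expect the main obstacle to be the rigorous justification of the second paragraph, namely that the nonnegative free-boundary measure in Kato's inequality carries zero total mass and therefore vanishes, so that $u_\pm$ may legitimately be treated as solutions to which the strong maximum principle applies.
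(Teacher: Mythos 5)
Your proof is correct, but it takes a genuinely different route from the paper's. The paper argues by duality: after smoothing, it solves the \emph{backward} heat equation $\psi_t+\Delta\psi=0$ with terminal datum $\chi_{\{u(\cdot,t_2)>0\}}$, derives the identity $\int_\Gamma (u(\cdot,t_2))_+\,dS=\int_\Gamma u(\cdot,t_1)\psi(\cdot,t_1)\,dS$, and applies the strong maximum principle to $\psi$: since the zero-mean condition forces $0<|\{u(\cdot,t_2)>0\}|<|\Gamma|$, one gets $\psi(\cdot,t_1)\leq\theta<1$ and hence the strict contraction $\int_\Gamma(u(\cdot,t_2))_+\,dS\leq\theta\int_\Gamma(u(\cdot,t_1))_+\,dS$, contradicting constancy unless $\{u(\cdot,t_2)>0\}$ is null. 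You instead apply the maximum principle to $u_\pm$ themselves, after observing that the hypothesis forces the nonnegative excess distribution $\mu_t=\Delta u_+-\Chi_{\{u>0\}}\Delta u$ from Kato's inequality to have zero total mass and hence to vanish, so that $u_+$ and $u_-$ are themselves nonnegative solutions of the heat equation; infinite speed of propagation then contradicts the disjointness of their supports. Your route makes the structural content of the hypothesis transparent (constancy of $\int_\Gamma u_+$ means no flux across the free boundary), uses only tools the paper deploys elsewhere (Kato's inequality and \eqref{eq:1001} from the proof of Theorem \ref{T.uniqueness1}, heat kernel positivity), and avoids both the backward dual problem and the $\delta\to 0$ continuity argument at the terminal time; the paper's duality argument, in exchange, never needs to upgrade Kato's inequality to an equality and yields a quantitative contraction factor $\theta$. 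Two points in your sketch deserve a word of justification but are unproblematic: the pairing of $\mu_t$ with $\one_\Gamma$ is legitimate because $\Gamma$ is compact without boundary (so constants are admissible test functions and nonnegative distributions are finite Radon measures), and for the final step it is safer to invoke the strong maximum principle (or Moser's Harnack inequality) for nonnegative weak solutions in the energy class $L^2(0,T;H^1(\Gamma))\cap H^1(0,T;L^2(\Gamma))$ rather than smoothness by hypoellipticity, since $\Gamma$ is only assumed $C^3$-regular.
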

\begin{proof}
  Due to the regularity of $u$ the second identity implies
  \begin{equation}
    \int_{\Gamma}(u(\cdot,t_1))_{+}dS=\int_{\Gamma}(u(\cdot,t_2))_+\,dS \qquad  \text{for any }0<t_{1}<t_{2}\leq T\,.\label{A3}
  \end{equation}
  Using standard smoothing effects we can assume that $u\in C^{\infty}(\Gamma\times (0,T))$.
  In particular we
  have that $t\mapsto u(\cdot,t) $ is continuous in $L^{q}(\Gamma)$ for any $q\in[1,\infty]$.
  We define $\psi$ as the solution of
  \[
    \psi_{t}+\Delta\psi=0\,, \qquad  \psi(\cdot,t_{2})  =\chi_{\{u(\cdot,t_{2})>0\}}\,, \qquad  t_{2}\in(0,T]\,.
  \]
  We notice that the set $\{  u(\cdot,t_{2})  >0\}$ is well defined since $u$ is smooth.
  Classical regularity theory for the heat equation implies that $\psi\in C^0([t_{1},t_{2}];L^{p}(\Gamma))  $
  with $0<t_{1}<t_{2}$ and $1\leq p<\infty$.
  Since  $\psi\in C^{\infty}([t_{1}, t_{2}-\delta]\times\Gamma)  $ for any arbitrarily small $\delta>0$ we can use
  $\psi$ as a test function in the equation for $u$.
  Then, integrating by parts we obtain
  \[
    \int_{\Gamma}u(  \cdot,t_{1})  \psi(\cdot,t_{1})\,dS=\int_{\Gamma}u(\cdot,t_{2}-\delta)  \psi( \cdot,t_{2}-\delta)  dS\,.
  \]
  Using the continuity of the map $t\mapsto u(\cdot,t)$  and
  $t\mapsto\psi(\cdot,t)  $ in $L^{2}(  \Gamma) $ we obtain that $u(\cdot,t_{2}-\delta)  \psi(  \cdot,t_{2}-\delta)$
  converges to $u(\cdot,t_{2})  \psi(\cdot,t_{2})$ in $L^{1}(\Gamma)  $ as $\delta
  \rightarrow0$. Thus
  \[
    \int_{\Gamma}u(\cdot,t_{2}-\delta)  \psi(\cdot,t_{2}-\delta)\, dS \rightarrow\int_{\Gamma}u(\cdot,t_2)
    \psi( \cdot, t_{2})\,  dS=\int_{\Gamma}( u(\cdot,t_2))_+\,dS\qquad \text{ as }\delta\rightarrow0,
  \]
  whence
  \begin{equation}
    \int_{\Gamma}(u(\cdot,t_{2}))_{+}\,dS=\int_{\Gamma}u(\cdot,t_{1})  \psi(  \cdot,t_{1})\,  dS\,.\label{A1}
  \end{equation}
  If $\vert \{  u(\cdot,t_{2})>0\}\vert >0$ we have, since $\int_{\Gamma}u(\cdot,t)\,dS  =0$ for all
  $t\in(0,T]$,  that   $\vert \{u(  \cdot,t_{2})  >0\} \vert <\vert \Gamma\vert$. Therefore, the strong maximum principle implies that for
  any $t_{1}<t_{2}$ we have
  \[
    0<\psi(\cdot,t_{1})  \leq\theta<1
  \]
  where $\theta$ depends on $t_{1}$.  Then
  \begin{align*}
    \int_{\Gamma}u(\cdot,t_{1})  \psi(\cdot,t_{1})\,dS &\leq\int\limits_{\{ u(  \cdot,t_{1})  >0\}  }u(\cdot,t_{1})\psi(\cdot,t_{1})\,dS
    \leq\theta\int\limits_{\{u(\cdot,t_{1})  >0\}  }u(\cdot,t_{1})\,dS=\theta\int_{\Gamma}u(\cdot,t_{1})_+\,dS\,.
  \end{align*}
  Combining this with (\ref{A1}) we obtain $\int_{\Gamma}(u(\cdot,t_2))_+\,dS \leq \theta \int_{\Gamma} u(\cdot,t_1)\,dS$
  which contradicts (\ref{A3}).

  Therefore $\vert\{  u(\cdot,t_{2})  >0\}\vert =0.$ Then, we have that $u(\cdot,t_{2})  \leq0,$ but
  since $\int_{\Gamma}u(\cdot,t_2)\,dS=0$ this implies that $u(\cdot,t_2)\equiv0$. Since $t_2$ was arbitrary this proves $u\equiv 0$.
\end{proof}

\section{The reduced model for finite cytosolic diffusion $D<\infty$  \label{4}}
\medskip
From now on we choose $D=1$. All arguments and calculations for the case $D\neq 1$ are analogue. We recall that we have also set $a_4=a_6=1$, which in particular gives $\ell=1$ in the characterization of Proposition \ref{P.equivalence}.

\subsection{Uniqueness of solutions \label{4.1}}
\medskip

In this section we consider a solution
$(u,w,\xi)$ in $ \Xspace \times L^2(0,T;H^1(\Omega)) \times L^\infty(\Gamma_T)$ of
\begin{align}
	\partial_t u&=\Delta u-(1-g)\xi+gw, \quad u\xi=u,\quad
  u\geq 0  &\text{  on } \Gamma_T \label{eq4,1}\\
	0&=\Delta w \text{ in } \Omega, \quad  \frac{\partial w}{\partial n}=(1-g)\xi-gw & \text{ on } \Gamma_T\,, \label{eq4,2}\\
	u(\cdot,0)&=u_0\, & \mbox{ on } \Gamma\,.\label{eq4,2b}
\end{align}
We recall that
\begin{equation}
  \xi(\cdot,t)=
  \begin{cases}
    1 &\text{a.e.~in } \{u(\cdot,t)>0 \}\\
    \frac{w g}{1-g}(\cdot,t)&\text{a.e.~in } \{u(\cdot,t)=0 \}
  \end{cases}\,.
  \label{newxi}
\end{equation}

In the following we use the operator $L_h$ as defined before Lemma \ref{L3.1}, i.e. for given $h\in L^\infty(\Gamma)$, $h\geq 0$ the function $z=L_hs$ solves
\begin{equation}
	0 =\Delta z \quad\text{ in } \Omega, \qquad
	\frac{\partial z}{\partial n}  + hz = s \quad\text{ on }\Gamma. \label{eq4,3}
\end{equation}

We next prove an $L^1$-contraction property and the uniqueness of solutions.
\begin{theorem}\label{T.uniqueness2}
 Consider two solutions $(u_k,\xi_k,w_k)$, $k=1,2$ of \eqref{eq4,1}-\eqref{eq4,2}. Then
 \begin{equation*}
   t\mapsto \int_{\Gamma} (u_1-u_2)_+(\cdot,t) \,dS \text{ is decreasing on }(0,T).
 \end{equation*}
In particular, given $u_0 \in L^2(\Gamma)$ with $u_0 \geq 0$ and $T>0$, there exists at most one solution $u \in \Xspace$, $\xi \in L^{\infty}(\Gamma_T)$, $w \in L^2(0,T;H^1(\Omega))$ of \eqref{eq4,1}-\eqref{eq4,2b}.
\end{theorem}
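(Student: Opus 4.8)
The plan is to follow the same scheme as in the proof of Theorem \ref{T.uniqueness1}, the only genuinely new point being the treatment of the nonlocal term $\int_\Gamma g(w_1-w_2)$ in place of the term involving $\alpha_1-\alpha_2$. First I would record that, by the regularity from Theorem \ref{T.conv2}, each $u_k\in W^{2,1}_p(\Gamma\times(\delta,T))$ for every $\delta>0$, $1\le p<\infty$, so that $(u_1-u_2)_+\in W^{1,p}(\Gamma\times(\delta,T))$ with $\partial_t(u_1-u_2)_+=\Chi_{\{u_1>u_2\}}\partial_t(u_1-u_2)$ and, by Kato's inequality, $\int_{\{u_1>u_2\}}\Delta(u_1-u_2)\,dS\le\langle\Delta(u_1-u_2)_+,\one_\Gamma\rangle=0$. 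Integrating the equation \eqref{eq4,1} for $u_1-u_2$ over $\{u_1>u_2\}$ and writing $\Chi_+:=\Chi_{\{u_1>u_2\}}$, this yields for almost all $t$
\begin{equation*}
 \frac{d}{dt}\int_\Gamma (u_1-u_2)_+\,dS \le -\int_\Gamma \Chi_+(1-g)(\xi_1-\xi_2)\,dS + \int_\Gamma \Chi_+ g(w_1-w_2)\,dS=:I,
\end{equation*}
so it remains to show $I\le 0$.

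The key idea is to represent both $w_k$ through one and the same nonlocal operator. Setting $\Chi:=\Chi_{\{u_1+u_2>0\}}$, the set $A:=\{u_1+u_2>0\}$ contains $\{u_k>0\}$ for \emph{both} $k$, so the flexible representation \eqref{eq75aa} (with the normalization $a_4=a_6=\ell=1$) gives $w_k=L_{\Chi g}\big((1-g)\Chi\xi_k\big)$, $k=1,2$, with the same weight $h=\Chi g$; note $\{\Chi g>0\}=\{u_1+u_2>0\}$ has positive measure at almost every $t$ since the positive mass is conserved (at times where it is null there is nothing to prove). Hence $w_1-w_2=L_{\Chi g}\big((1-g)\Chi(\xi_1-\xi_2)\big)$, and the self-adjointness \eqref{Lselfadjoint} of $L_{\Chi g}$ lets me move the operator onto $\Chi_+ g$:
\begin{equation*}
 \int_\Gamma \Chi_+ g(w_1-w_2)\,dS = \int_\Gamma (1-g)\Chi(\xi_1-\xi_2)\,\psi\,dS,\qquad \psi:=L_{\Chi g}(\Chi_+ g).
\end{equation*}
Since $0\le\Chi_+ g\le \Chi g$ and $L_{\Chi g}(\Chi g)=1$ by \eqref{Lidentity}, the positivity \eqref{Lnonneg} of $L_{\Chi g}$ gives $0\le\psi\le1$ on $\overline\Omega$, and collecting terms I obtain $I=\int_\Gamma (1-g)(\xi_1-\xi_2)\big(\Chi\psi-\Chi_+\big)\,dS$.

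I would then argue the sign region by region, exactly as in \eqref{mon.for.b}. On $\{u_1>u_2\}$ one has $\xi_1=1$, so $\xi_1-\xi_2\ge0$, while $\Chi\psi-\Chi_+=\psi-1\le0$; on $\{u_2>u_1\}$ one has $\xi_2=1$, so $\xi_1-\xi_2\le0$, while $\Chi\psi-\Chi_+=\psi\ge0$; on $\{u_1=u_2>0\}$ we have $\xi_1=\xi_2$; and on $\{u_1=u_2=0\}$ both $\Chi$ and $\Chi_+$ vanish, so the integrand is zero. As $1-g>0$, the integrand is nonpositive on each region and hence $I\le0$. I expect this to be the crux of the argument: the choice of the common set $A=\{u_1+u_2>0\}$ is precisely what lets the destabilizing contribution $\int_\Gamma \Chi_+ g(w_1-w_2)$ combine with the stabilizing term $-\int_\Gamma \Chi_+(1-g)(\xi_1-\xi_2)$ into a manifestly nonpositive quantity, and — crucially — makes the delicate contact region $\{u_1=u_2=0\}$ drop out automatically, without having to control the sign of $w_1-w_2$ there. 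This is the finite-$D$ analogue of the identity \eqref{neweq9}.

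Finally, since $\Xspace\embeds C^0([0,T];L^2(\Gamma))$, the map $t\mapsto\int_\Gamma(u_1-u_2)_+(\cdot,t)\,dS$ is continuous on $[0,T]$ and, by the above, nonincreasing on $(0,T)$, which is the claimed contraction. If moreover $u_1(\cdot,0)=u_2(\cdot,0)=u_0$, this quantity vanishes at $t=0$, hence for all $t$, so $u_1\le u_2$; by the symmetry of the argument $u_1=u_2$, and then $\xi$ and $w$ are determined by $u$ through \eqref{newxi} and \eqref{eq75a}, giving the asserted uniqueness.
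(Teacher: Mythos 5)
Your proposal is correct and follows essentially the same route as the paper: the same key representation $w_1-w_2=L_{\Chi g}\big(\Chi(s_1-s_2)\big)$ over the common positivity set $\{u_1+u_2>0\}$, combined with Kato's inequality and the self-adjointness, the identity $L_{\Chi g}(\Chi g)=1$, and the positivity of $L_{\Chi g}$ from Lemma \ref{L3.1}. The only difference is organizational: you move the operator onto $\Chi_+ g$ and check signs pointwise via $0\le\psi=L_{\Chi g}(\Chi_+ g)\le 1$, whereas the paper splits $\Chi=(\Chi-\Chi_+)+\Chi_+$ and applies positivity of $L_{\Chi g}$ to each resulting term; by \eqref{Lselfadjoint} these are the same cancellation written in dual form.
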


\begin{proof}
As above, by parabolic regularity results, we have $u_k\in W^{2,1}_p(\Gamma\times (\delta,T))$ for any $\delta>0$, $1\leq p<\infty$.

Letting $s_k=(1-g)\xi_k$ we have
\begin{equation}
  w_k = L_g s_k. \label{eq4,4}
\end{equation}
In the following we let $\Chi_+=\Chi_{\{u_1>u_2\}}$ and $\Chi=\Chi_{\{u_1+u_2>0\}}$. As in the proof of \eqref{eq75a} we conclude that the difference $w_1-w_2$ satisfies
\begin{equation}
 w_1-w_2 = L_{\Chi g} \big(\Chi (s_1-s_2)\big) \,. \label{eq4,5}
\end{equation}

Following the arguments in the proof of Theorem \ref{T.uniqueness1} we obtain, using also Lemma \ref{L3.1}, that
\begin{align}
  \frac{d}{dt} \int_{\Gamma}& (u_1- u_2)_+\,dS  \leq \int_{\{u_1>u_2\}}\big( -(s_1-s_2)+g(w_1-w_2)\big)\,dS \nonumber \\
  &= \int_{\Gamma} \big( -\Chi_+(s_1-s_2) + \Chi_+ g L_{\Chi g}\big(\Chi (s_1-s_2)\big)\big)\,dS  \nonumber \\
  &= \int_{\Gamma} \big(-\Chi_+(s_1-s_2)L_{\Chi g}(\Chi g) + \Chi_+ gL_{\Chi g}\big(\Chi (s_1-s_2)\big) \big)\,dS  \label{eq4,6}\\
  &=\int_{\Gamma} -\Chi gL_{\Chi g}\big(\Chi_+(s_1-s_2)\big)+ \Chi_+ gL_{\Chi g}\big(\Chi (s_1-s_2)\big) \big)\,dS \nonumber\\
  &= -\int_{\Gamma} \big( \Chi-\Chi_+) gL_{\Chi g}\big(\Chi_+(s_1-s_2)\big)\,dS
  +\int_{\Gamma} \Chi_+ gL_{\Chi g}\big((\Chi-\Chi_+)(s_1-s_2)\big)\,dS\leq 0\,.\nonumber
\end{align}
In the last line we have used in the first term that  $\Chi-\Chi_+\geq 0$ and $\Chi_+(s_1-s_2)\geq 0$ and for the second term that $\Chi-\Chi_+=\Chi_{\{u_2>u_1\}}+\Chi_{\{u_1=u_2>0\}}$,
that $s_1\leq s_2$ on $\{u_2>u_1\}$ and $s_1=s_2$ on $\{u_1=u_2>0\}$.

Applying the same argument to $u_2-u_1$ we find that $\int_{\Gamma}|u_1-u_2|\,dS$ is decreasing in time,
and in particular $u_1=u_2$ since the initial data are the same.

From Remark \ref{rem:xiw} it follows that $w_1=w_2$ and $\xi_1=\xi_2$.
\end{proof}

With similar arguments as in the proof of Theorem \ref{T.uniqueness2} we can also show  uniqueness of steady states for given mass $m$. This result has been shown in \cite{NiRV20} only in the
case that $\Gamma$ is a sphere. In the following Theorem we prove even more, namely a monotonicity result from which uniqueness of steady states follows.

\begin{theorem}[Monotonicity]\label{thm:monotonicity}
Let $(u_1,w_1,\xi_1)$, $(u_2,w_2,\xi_2) \in H^2(\Gamma) \times H^1(\Omega) \times L^{\infty}(\Gamma)$ be solutions  to
\begin{align}
		-\Delta u&=-(1-g)\xi+gw\,, \qquad u\xi=u,\quad
    u\geq 0  &\text{  on } \Gamma \label{eq4s1}\\
		0&=\Delta w \text{ in } \Omega\,, \qquad \qquad   \frac{\partial w}{\partial n}=(1-g)\xi-gw& \text{ on } \Gamma \,, \label{eq4s2}
				\end{align}
with $\int_{\Gamma} u_1\,dS = m_1$ and $\int_{\Gamma} u_2\,dS=m_2$.
Suppose that $m_1 \geq m_2$, then
\[
  u_1\geq u_2,\qquad w_1\geq w_2,\qquad \xi_1\geq \xi_2
  \qquad\text{  on }\Gamma.
\]
\end{theorem}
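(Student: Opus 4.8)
The plan is to establish the three inequalities in sequence: first the central estimate $u_1\geq u_2$, from which $w_1\geq w_2$ and then $\xi_1\geq\xi_2$ will follow. Throughout I write $s_k=(1-g)\xi_k$, so that $w_k=L_g s_k$, and I set $\Chi=\Chi_{\{u_1+u_2>0\}}$ and $\tilde\Chi_+=\Chi_{\{u_2>u_1\}}$. As in the proof of Theorem~\ref{T.uniqueness2}, elliptic regularity upgrades the $H^2(\Gamma)$-regularity of $u_1,u_2$ to $W^{2,p}(\Gamma)$, which is what is needed to invoke Kato's and Stampacchia's lemmas; in particular $\Delta u_k=0$ a.e.\ on $\{u_k=0\}$, and arguing exactly as for \eqref{eq4,5} the difference of the bulk fields satisfies $w_2-w_1=L_{\Chi g}\big(\Chi(s_2-s_1)\big)$.

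First I would run the stationary analogue of the monotonicity computation \eqref{eq4,6}. Since there is no time derivative, its role is played here by $\int_{\{u_2>u_1\}}(-\Delta(u_2-u_1))\,dS$, which is nonnegative: by Kato's inequality $\Chi_{\{u_2>u_1\}}\Delta(u_2-u_1)\leq\Delta(u_2-u_1)_+$, and integration over the closed surface $\Gamma$ gives $\int_{\{u_2>u_1\}}\Delta(u_2-u_1)\,dS\leq 0$. Substituting $-\Delta(u_2-u_1)=-(s_2-s_1)+g(w_2-w_1)$ and using the identity $L_{\Chi g}(\Chi g)=1$ from \eqref{Lidentity} together with the self-adjointness \eqref{Lselfadjoint}, I expect to reproduce verbatim the decomposition
\begin{equation*}
\int_{\{u_2>u_1\}}\!\!(-\Delta(u_2-u_1))\,dS= -\int_\Gamma(\Chi-\tilde\Chi_+)\,g\,L_{\Chi g}\big(\tilde\Chi_+(s_2-s_1)\big)\,dS + \int_\Gamma \tilde\Chi_+\,g\,L_{\Chi g}\big((\Chi-\tilde\Chi_+)(s_2-s_1)\big)\,dS,
\end{equation*}
whose two terms are each nonpositive, by the sign relations $\tilde\Chi_+(s_2-s_1)\geq 0$ and $(\Chi-\tilde\Chi_+)(s_2-s_1)\leq 0$ (which hold because $\xi_k=1$ on $\{u_k>0\}$ while $\xi_k\leq 1$), together with the nonnegativity of $L_{\Chi g}$ on nonnegative data. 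Comparing the two bounds forces the left-hand side to vanish and both integrals on the right to be zero.

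The main obstacle is to upgrade this vanishing of the dissipation to the pointwise ordering $u_1\geq u_2$; this is where the strict positivity \eqref{Lpositive} of $L_{\Chi g}$ and the mass hypothesis $m_1\geq m_2$ enter. Writing $\tau=(\Chi-\tilde\Chi_+)(s_1-s_2)\geq 0$, vanishing of the second integral reads $\int_\Gamma\tilde\Chi_+ g\,L_{\Chi g}(\tau)\,dS=0$; if $\int_\Gamma\tau\,dS>0$, then \eqref{Lpositive} bounds $L_{\Chi g}(\tau)$ below by a positive constant, forcing $\int_\Gamma\tilde\Chi_+ g\,dS=0$ and hence, since $g>0$, that $|\{u_2>u_1\}|=0$, as desired. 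In the complementary degenerate case $\tau\equiv 0$ the same reasoning applied to the first integral shows $\Chi=\tilde\Chi_+$ a.e.---which would give $u_2>u_1$ on the whole positivity set and thus $m_2>m_1$, contradicting the hypothesis---unless the source $\tilde\Chi_+(s_2-s_1)$ also vanishes. One then finds $\xi_1=\xi_2=1$ on $\{u_1+u_2>0\}$, whence $w_2-w_1=L_{\Chi g}(0)=0$ and $\Delta(u_2-u_1)=0$ on $\{u_2>u_1\}$; consequently $(u_2-u_1)_+$ is subharmonic on the closed surface $\Gamma$, hence locally constant, and a positive constant is again excluded by $m_1\geq m_2$. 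In every case $u_1\geq u_2$.

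It remains to order $w$ and $\xi$. Once $u_1\geq u_2$ is known, $\{u_2>0\}\subseteq\{u_1>0\}$, so on $\{u_2>0\}$ one has $\xi_1=\xi_2=1$, on $\{u_1>0=u_2\}$ one has $\xi_1=1\geq\xi_2$, while on $A:=\{u_1=u_2=0\}$ the characterization \eqref{newxi} gives $(1-g)(\xi_1-\xi_2)=g(w_1-w_2)$. Subtracting the Robin conditions, the difference $z=w_1-w_2$ is harmonic in $\Omega$ with $\frac{\partial z}{\partial n}+gz=(1-g)(\xi_1-\xi_2)$ on $\Gamma$, and the boundary data vanish on $A$ while being nonnegative on $\Gamma\setminus A$; writing this as $z=L_{g\Chi_{\Gamma\setminus A}}\big((1-g)(\xi_1-\xi_2)\Chi_{\Gamma\setminus A}\big)$ with nonnegative source, the nonnegativity \eqref{Lnonneg} of $L_h$ yields $w_1\geq w_2$. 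Finally $\xi_1\geq\xi_2$ follows from \eqref{newxi}: the two are equal and both $1$ on $\{u_2>0\}$, while on $\{u_1=0\}$ (where necessarily $u_2=0$) we get $\xi_1=\frac{w_1 g}{1-g}\geq\frac{w_2 g}{1-g}=\xi_2$.
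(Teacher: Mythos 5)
Your proposal is correct and takes essentially the same route as the paper: the stationary analogue of the contraction computation \eqref{eq4,6} (run over $\{u_2>u_1\}$ instead of $\{u_1>u_2\}$, using Kato's inequality, \eqref{Lidentity}, \eqref{Lselfadjoint}), the positivity \eqref{Lpositive} of $L_{\Chi g}$ combined with the mass hypothesis to force $u_1\geq u_2$, and then the representation $w_1-w_2=L_{\Chi g}\bigl(\Chi(1-g)(\xi_1-\xi_2)\bigr)$ (your derivation via $\partial_n(w_1-w_2)=0$ on $\{u_1=u_2=0\}$ is exactly \eqref{eq75a}) together with \eqref{newxi} to order $w$ and $\xi$. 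The only cosmetic deviations are that you organize the case analysis by vanishing of the sources rather than of the indicator functions, and that in the degenerate sub-case you conclude via subharmonicity of $(u_2-u_1)_+$ on the closed surface where the paper instead tests the difference equation with $(u_1-u_2)_+$; both arguments show the positive part is constant and are then finished by the same mass comparison.
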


\begin{proof}
  Again we let $s_k=(1-g)\xi_k$, $\Chi_+=\Chi_{\{u_1>u_2\}}$ and $\Chi=\Chi_{\{u_1+u_2>0\}}$.

  We first show that $u_1\geq u_2$. We integrate the difference of the equations for $u_1$ and $u_2$ over the set $\{u_1>u_2\}$ and obtain, exactly as in \eqref{eq4,6} that
  \begin{align}
    0 & \leq \int_{\{u_1>u_2\}}\big( -(1-g)(\xi_1-\xi_2) +g(w_1-w_2)\big)\,dS \nonumber \\
     &= -\int_{\Gamma} \big( \Chi-\Chi_+) gL_{\Chi g}\big(\Chi_+(s_1-s_2)\big)\,dS
    +\int_{\Gamma} \Chi_+ gL_{\Chi g}\big((\Chi-\Chi_+)(s_1-s_2)\big)\,dS \leq 0\,.
    \label{eq4,9}
  \end{align}
  We now exploit that both integrands in the last line of \eqref{eq4,9} vanish. If $\Chi_+=0$ almost everywhere or $\Chi-\Chi_+=0$ almost everywhere,
  then $u_1\leq u_2$ or $u_1\geq u_2$, respectively, hence $u_1 \geq u_2$ almost everywhere since we have assumed that $m_1\geq m_2$.

  If $\Chi_+$ and $\Chi-\Chi_+$ are both nontrivial we deduce from the positivity of $L_{\Chi g}$, see \eqref{Lpositive}, that $s_1=s_2$ and thus $\xi_1=\xi_2$ in $\{u_1+u_2>0\}$.
  By the first line in \eqref{eq4,9} this in addition implies $w_1=w_2$ in $\{u_1>u_2\}$.

  Testing the difference equation with $(u_1-u_2)_+$ yields
  \begin{equation*}
    0 = \int_\Gamma \big(|\nabla (u_1-u_2)_+|^2 + \big((s_1-s_2)-g(w_1-w_2)\big)(u_1-u_2)_+\big)\,dS = \int_\Gamma |\nabla (u_1-u_2)_+|^2\,dS\,.
  \end{equation*}
  This implies that $(u_1-u_2)_+$ is constant, from which we obtain by $m_1\geq m_2$  that $u_1\geq u_2$.

  \medskip
  The property $u_1\geq u_2$ implies that $\Chi(\xi_1-\xi_2)\geq 0$. Therefore \eqref{eq75a} and the positivity of $L_h$, see \eqref{Lpositive}, imply that
  \begin{equation*}
    w_1-w_2 = L_{\Chi g}\big((1-g)\Chi(\xi_1-\xi_2)) \geq 0.
  \end{equation*}
  Then, using \eqref{eq75b} we finally deduce $\xi_1\geq\xi_2$.
\end{proof}

\subsection{Global stability of steady states \label{4.2}}
Again we assume in this section that $c=c(x)$ does not depend on time, hence $g$ has the same property. We prove the convergence of the obstacle-type problem for finite diffusion to the stationary state with the same mass.
We again denote the shift operator by $\shift_t$, see the definition before Theorem \ref{T.stability1}.

\begin{theorem}\label{T.stability2}
 The unique solution $(u,w,\xi)$ of \eqref{eq4,1}-\eqref{eq4,2b} converges as $t \to \infty$ to the unique stationary solution $(\stern  u,\stern w,\stern\xi)$ of \eqref{eq4s1}-\eqref{eq4s2} with
 $\int_{\Gamma} \stern u\,dS=m=\int_{\Gamma}u_0\,dS$,
 more precisely
 \begin{equation}
   \shift_t u \weakto \stern u \text{ in }W^{2,1}_p(\Gamma_1)\,,\quad
   \shift_t\xi \weakstarto \stern\xi \text{ in }L^\infty(\Gamma_1)\,,\quad
   \shift_t\alpha \weakstarto \stern\alpha \text{ in }L^\infty(0,1)\,.
   \label{eq:convergence3}
 \end{equation}
 In particular, $\shift_tu$ converges with $t\to\infty$ uniformly on $\Gamma$ to $\stern u$.
\end{theorem}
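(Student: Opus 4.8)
The plan is to follow the proof of Theorem~\ref{T.stability1} line by line, replacing the scalar multiplier $\alpha$ and its characterization by the nonlocal harmonic field $w$ and the operator $L_{\Chi g}$, whose positivity \eqref{Lpositive} will play here the role that the mass formula for $\alpha$ played in the case $D=\infty$. First I would introduce the time shifts $u_k:=\shift_k u$, $w_k:=\shift_k w$, $\xi_k:=\shift_k\xi$ for $k\in\N$. Since the coefficients are time-independent, each triple $(u_k,w_k,\xi_k)$ again solves \eqref{eq4,1}-\eqref{eq4,2} on $\Gamma_1$, and by the uniform bounds of Theorem~\ref{T.bounds2} together with the parabolic regularity established in Theorem~\ref{T.conv2} the family is bounded in $W^{2,1}_p(\Gamma_1)\times L^2(0,1;H^1(\Omega))\times L^\infty(\Gamma_1)$, uniformly in $k$, for every $p\in[1,\infty)$. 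Passing to a subsequence I obtain weak and weak-$*$ limits $(u_\infty,w_\infty,\xi_\infty)$ that again solve \eqref{eq4,1}-\eqref{eq4,2} on $\Gamma_1$, and the compact embedding $W^{2,1}_p(\Gamma_1)\embeds C^0(\Gamma\times[0,1])$ for $p>\tfrac52$ gives $\shift_k u\to u_\infty$ uniformly; by Remark~\ref{rem:xiw} the limits $w_\infty$ and $\xi_\infty$ are then determined by $u_\infty$.

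Next I would apply the $L^1$-contraction of Theorem~\ref{T.uniqueness2}, comparing the evolving solution with the time-independent solution $(\stern u,\stern w,\stern\xi)$. This shows that $t\mapsto\int_\Gamma(u-\stern u)_+(\cdot,t)\,dS$ is non-increasing and bounded below, hence convergent as $t\to\infty$; consequently $\int_\Gamma(u_\infty(\cdot,t)-\stern u)_+\,dS$ is independent of $t\in(0,1)$, so the dissipation in the contraction identity \eqref{eq4,6} (with $s_1=(1-g)\xi_\infty$, $s_2=(1-g)\stern\xi$) vanishes for almost every $t$. Since both integrands on the right-hand side of \eqref{eq4,6} are nonpositive, each must vanish.

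The heart of the argument, and the step I expect to be the main obstacle, is to extract $u_\infty=\stern u$ from this degenerate dissipation. Writing $\Chi_+=\Chi_{\{u_\infty>\stern u\}}$ and $\Chi=\Chi_{\{u_\infty+\stern u>0\}}$, I would argue exactly as in Theorem~\ref{thm:monotonicity}: if one of $\Chi_+$, $\Chi-\Chi_+$ is almost everywhere trivial, then $u_\infty$ and $\stern u$ are ordered and the equal-mass condition $\int_\Gamma u_\infty(\cdot,t)\,dS=m=\int_\Gamma\stern u\,dS$ forces $u_\infty=\stern u$; otherwise the strict positivity \eqref{Lpositive} of $L_{\Chi g}$, together with $g>0$ from \eqref{assum}, forces $(1-g)(\xi_\infty-\stern\xi)=0$, and hence $\xi_\infty=\stern\xi$, on $\{u_\infty+\stern u>0\}$. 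The delicate point is to upgrade this to the \emph{global} identities $\xi_\infty=\stern\xi$ and $w_\infty=\stern w$ on all of $\Gamma$, since on the coincidence set $\{u_\infty=\stern u=0\}$ the values of $\xi$ and $w$ are coupled nonlocally through \eqref{newxi} and $w=L_g\big((1-g)\xi\big)$. I would close this by testing the harmonic difference $w_\infty-\stern w$ against itself: using \eqref{newxi} on $\{u_\infty=\stern u=0\}$ one finds that $w_\infty-\stern w$ is harmonic with the relation $\partial_n(w_\infty-\stern w)=-g(w_\infty-\stern w)\Chi_{\{u_\infty+\stern u>0\}}$ on $\Gamma$, whence the energy identity $\int_\Omega|\nabla(w_\infty-\stern w)|^2\,dx=-\int_\Gamma g(w_\infty-\stern w)^2\Chi_{\{u_\infty+\stern u>0\}}\,dS$ gives $\nabla(w_\infty-\stern w)\equiv0$ and, since $\{u_\infty+\stern u>0\}$ has positive measure and $g>0$, $w_\infty=\stern w$ and therefore $\xi_\infty=\stern\xi$ everywhere.

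Once $\xi_\infty=\stern\xi$ and $w_\infty=\stern w$, the reaction terms in \eqref{eq4,1} cancel against those of the stationary equation, so that $u_\infty-\stern u$ solves the homogeneous heat equation on $\Gamma_1$, has vanishing spatial mean, and constant positive-part integral. Lemma~\ref{lem:heatequation} then yields $u_\infty=\stern u$. As the limit is independent of the extracted subsequence, the whole family converges, giving \eqref{eq:convergence3} together with $\shift_t w\weakto\stern w$; the uniform convergence of $\shift_t u$ on $\Gamma$ follows from the same compact embedding used in the first step.
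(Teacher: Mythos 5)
Your proposal is correct and follows the same overall architecture as the paper's proof: time shifts and uniform $W^{2,1}_p\times L^2(0,1;H^1(\Omega))\times L^\infty$ bounds to extract a limit solution $(u_\infty,w_\infty,\xi_\infty)$; the $L^1$-contraction of Theorem~\ref{T.uniqueness2} against the stationary solution to force $t\mapsto\int_\Gamma(u_\infty-\stern u)_+\,dS$ to be constant and the two dissipation integrals in \eqref{eq4,6} to vanish; the dichotomy based on the positivity \eqref{Lpositive} of $L_{\Chi g}$ giving either $u_\infty=\stern u$ or $\xi_\infty=\stern\xi$ on $\{u_\infty+\stern u>0\}$; and finally Lemma~\ref{lem:heatequation} applied to $u_\infty-\stern u$. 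The one place where you deviate is the upgrade to $w_\infty=\stern w$ on all of $\Gamma$: the paper cites the representation formula \eqref{eq75aa} with $A=\{u_\infty+\stern u>0\}$, observing that $w_\infty$ and $\stern w$ are then given by the same expression, whereas you prove the same identity by a direct energy computation for the harmonic difference, using \eqref{newxi} to see that $\partial_n(w_\infty-\stern w)=-g(w_\infty-\stern w)\Chi_{\{u_\infty+\stern u>0\}}$, so that $\int_\Omega|\nabla(w_\infty-\stern w)|^2\,dx\leq 0$ and, since $g>0$ and $|\{u_\infty+\stern u>0\}|>0$, the constant difference must vanish. Your argument is correct and is in effect an unfolding of what the well-posedness behind \eqref{eq75aa} encodes; it is self-contained and makes the mechanism (uniqueness for the Robin problem determining $w$ from $\xi$ on the positivity set) explicit, at the cost of a few extra lines compared with the paper's citation of the already-established representation.
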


\begin{proof}
Since $(\stern  u,\stern w,\stern\xi)$ is a solution of \eqref{eq4,1}-\eqref{eq4,2} we obtain from Theorem \ref{T.uniqueness2} that 
$t\mapsto \int_{\Gamma} (u(\cdot,t)- \stern u)_+\,dS$ is decreasing and
\begin{equation}
  \lim_{T\to\infty} \int_{\Gamma} (u(\cdot,T)- \stern u)_+\,dS \quad\text{ exists.}
  \label{eq:Tlim}
\end{equation}
We consider for $k\in \N$ the functions
\begin{align*}
	&(u_k,w_k,\xi_k)\in W^{2,1}_p(\Gamma_1)\times L^2(0,1;H^1(\Omega))\times L^\infty(\Gamma_1),\\
	&\big(u_k(\cdot,t), w_k(\cdot,t), \xi_k(\cdot,t)\big) =
	\big(u(\cdot,t+k), w(\cdot,t+k), \xi(\cdot,t+k)\big).
\end{align*}
Then $u_k, w_k, \xi_k$ are uniformly bounded in $W^{2,1}_p(\Gamma_1)\times L^2(0,1;H^1(\Omega))\times L^\infty(\Gamma_1)$ for all $p \in [1,\infty)$.
Hence, there exists $(u_\infty,w_\infty,\xi_\infty) \in  W^{2,1}_p(\Gamma_1)\times L^2(0,1;H^1(\Omega))\times L^\infty(\Gamma_1)$ such that for some subsequence $k \to \infty$
\begin{equation}
  u_k \weakto u_\infty \text{ in } W^{2,1}_p(\Gamma_1)\,,\quad
  w_k \weakto w_\infty \text{ in } L^2(0,1;H^1(\Omega))\,,\quad
	\xi_k\weakstarto \xi_\infty \text{ in }L^\infty(\Gamma_1)\,.
		\label{eq:convergence2}
\end{equation}
As in the proof of Theorem \ref{T.stability1} we deduce that $\lim_{t\to\infty} u(\cdot,t)=\stern u$ in $C^0(\Gamma)$ and that $(u_\infty,w_\infty,\xi_\infty)$ is again a solution of \eqref{eq4,1},\eqref{eq4,2}.
We prove that this solution is time-independent and coincides with $(\stern u,\stern w,\stern \xi)$.

We first deduce from \eqref{eq:Tlim} as in \eqref{eq:tindep} that $t\mapsto \int_{\Gamma} (u_\infty(\cdot,t)- \stern u)_+\,dS$ is independent of $t\in (0,1)$.

Since $(u_\infty,w_\infty,\xi_\infty)$ and $(\stern u,\stern w,\stern \xi)$ are both solutions to \eqref{eq4,1},\eqref{eq4,2} we obtain from \eqref{eq4,6} that
\begin{align*}
  0 &= \frac{d}{dt} \int_{\Gamma} (u_\infty- \stern u)_+\,dS  \\
  &\leq -\int_{\Gamma} \big( \Chi-\Chi_+) gL_{\Chi g}\big(\Chi_+(s_\infty-\stern s)\big)\,dS
  +\int_{\Gamma} \Chi_+ gL_{\Chi g}\big((\Chi-\Chi_+)(s_\infty-\stern s)\big)\,dS\leq 0\,,
\end{align*}
where  $s_\infty=(1-g)\xi_\infty$, $\stern s=(1-g)\stern \xi$, $\Chi_+=\Chi_{\{u_\infty>\stern u\}}$ and $\Chi=\Chi_{\{u_\infty+\stern u>0\}}$. We therefore deduce as for \eqref{eq4,6} that both integrals on the right-hand side are zero.

In this situation we can follow the arguments after \eqref{eq4,9}. Since $u_\infty$, $\stern u$ have the same mass we obtain that $u_\infty=\stern u$ or $s_\infty=\stern s$ on $\{u_\infty+\stern u>0\}$. In the first case the claim is proved.

In the second case we have $\xi_\infty=\stern \xi$ on $\{u_\infty+\stern u>0\}$ and it remains to examine what holds in the region $\{ u_\infty=\stern u=0\}$.
To this end, it is more convenient to show first that $w_\infty=\stern w$. This follows easily from
\eqref{eq75aa}, with $A=\{ u_\infty+\stern u>0\}$. Indeed, since $\xi_{\infty}=\stern \xi$ almost everywhere in $\{u_\infty+\stern u>0\}$, we deduce that $w_\infty =\stern w$ almost everywhere in $\Gamma \times (0,1)$.
This, combined with \eqref{eq75b} implies that $\xi_{\infty}=\stern \xi$ almost everywhere in $\Gamma \times (0,1)$.
\medskip

What is left to prove is that $(\xi_\infty,w_\infty)=(\stern\xi, \stern w)$ implies $u_\infty=\stern u$. We notice that:
\begin{align*}
  \partial_t (u_\infty-\stern u) &=\Delta (u_\infty-\stern u).
\end{align*}
In addition, $\int_\Gamma u_\infty(\cdot,t) \;dS=\int_\Gamma \stern u(\cdot,t)\;dS$ for all $t\in (0,1)$ and we recall that $\int_\Gamma (u_\infty -\stern u)_+(\cdot,t) \,dS$ is constant for all $t \in (0,1)$.
Therefore, it follows from Lemma \ref{lem:heatequation} that $u_\infty=\stern u$.

\end{proof}

\bigskip
{\bf Acknowledgments.}
The authors acknowledge the support of the Hausdorff Center of Mathematics at the University of Bonn.

\bigskip

\begin{thebibliography}{10}

\bibitem{Aubi63}
Jean-Pierre {Aubin}.
\newblock {Un th\'eor\`eme de compacite.}
\newblock {\em {C. R. Acad. Sci., Paris}}, 256:5042--5044, 1963.

\bibitem{Baio72}
Claudio Baiocchi.
\newblock Su un problema di frontiera libera connesso a questioni di idraulica.
\newblock {\em Ann. Mat. Pura Appl. (4)}, 92:107--127, 1972.

\bibitem{ChOt16}
Yougan Cheng and Hans Othmer.
\newblock A model for direction sensing in {D}ictyostelium {D}iscoideum: Ras activity and symmetry breaking driven by a {G}$\beta\gamma$-mediated, {G}$\alpha$2-{R}ic8 --dependent signal transduction network.
\newblock {\em PLoS Computational Biology}, 12, 2016.

\bibitem{Duva73}
Georges {Duvaut}.
\newblock {R\'esolution d'un probl\`eme de Stefan (fusion d'un bloc de glace
  \`a z\'ero degr\'e).}
\newblock {\em {C. R. Acad. Sci., Paris, S\'er. A}}, 276:1461--1463, 1973.

\bibitem{Evan10}
Lawrence~C. Evans.
\newblock {\em Partial differential equations}, volume~19 of {\em Graduate
  Studies in Mathematics}.
\newblock American Mathematical Society, Providence, RI, Providence, RI, second
  edition, 2010.

\bibitem{EvGa15}
Lawrence~Craig {Evans} and Ronald~F. {Gariepy}.
\newblock {\em {Measure theory and fine properties of functions. 2nd revised
  ed.}}
\newblock Boca Raton, FL: CRC Press, 2nd revised ed. edition, 2015.

\bibitem{Frie88}
Avner {Friedman}.
\newblock {\em {Variational principles and free-boundary problems. Reprint.}}
\newblock Malabar, FL: Krieger Publishing Co., Inc., reprint edition, 1988.

\bibitem{GiTr01}
David Gilbarg and Neil~S. Trudinger.
\newblock {\em Elliptic partial differential equations of second order}.
\newblock Classics in Mathematics. Springer-Verlag, Berlin, 2001.

\bibitem{HaSa89}
Jack~K. Hale and K.~Sakamoto.
\newblock Shadow systems and attractors in reaction-diffusion equations.
\newblock {\em Applicable Analysis. An International Journal},
  32(3-4):287--303, 1989.

\bibitem{HaRo18}
Stephan Hausberg and Matthias R{\"o}ger.
\newblock Well-posedness and fast-diffusion limit for a bulk--surface
  reaction--diffusion system.
\newblock {\em Nonlinear Differential Equations and Applications NoDEA},
  25(3):17, Apr 2018.

\bibitem{Kato72}
Tosio Kato.
\newblock Schr\"{o}dinger operators with singular potentials.
\newblock {\em Israel J. Math.}, 13:135--148 (1973), 1972.

\bibitem{Keen78}
James~P. Keener.
\newblock Activators and inhibitors in pattern formation.
\newblock {\em Stud. Appl. Math.}, 59(1):1--23, 1978.

\bibitem{KhHW00}
B.~N. Kholodenko, J.~B. Hoek, and H.~V. Westerhoff.
\newblock Why cytoplasmic signalling proteins should be recruited to cell
  membranes.
\newblock {\em Trends Cell Biol.}, 10:173--178, 2000.

\bibitem{KiSt00}
David {Kinderlehrer} and Guido {Stampacchia}.
\newblock {\em {An introduction to variational inequalities and their
  applications. Reprint of the 1980 original.}}, volume~31.
\newblock Philadelphia, PA: Society for Industrial and Applied Mathematics
  (SIAM), reprint of the 1980 original edition, 2000.

\bibitem{LaSU68}
O.~A. Lady{\v z}enskaja, V.~A. Solonnikov, and N.~N. Ural'ceva.
\newblock {\em Linear and Quasi-linear Equations of Parabolic Type}, volume~23
  of {\em Translations of Mathematical Monographs}.
\newblock American Mathematical Society, 1968.

\bibitem{LeIg02}
A.~Levchenko and P.~A. Iglesias.
\newblock {Models of eukaryotic gradient sensing: application to chemotaxis of
  amoebae and neutrophils.}
\newblock {\em Biophysical journal}, 82:50--63, 2002.

\bibitem{LiNi09}
Fang Li and Wei-Ming Ni.
\newblock On the global existence and finite time blow-up of shadow systems.
\newblock {\em J. Differential Equations}, 247(6):1762--1776, 2009.

\bibitem{Lion69}
J.~L. Lions.
\newblock {\em Quelques m\'ethodes de r\'esolution des probl\`emes aux limites
  non lin\'eaires}.
\newblock Dunod, Paris, 1969.

\bibitem{Mein99}
H.~Meinhardt.
\newblock Orientation of chemotactic cells and growth cones: models and
  mechanisms.
\newblock {\em Journal of Cell Science}, 112(17):2867--2874, 1999.

\bibitem{NIIS14}
A.~Nakajima, S.~Ishihara, D.~Imoto, and S.~Sawai.
\newblock {{R}ectified directional sensing in long-range cell migration}.
\newblock {\em Nat Commun}, 5:5367, Nov 2014.

\bibitem{NiRV20}
Barbara Niethammer, Matthias R\"{o}ger, and Juan Vel\'{a}zquez.
\newblock A bulk-surface reaction-diffusion system for cell polarization.
\newblock {\em Interfaces Free Bound.}, 22(1):85--117, 2020.

\bibitem{Nitt11}
Robin Nittka.
\newblock Regularity of solutions of linear second order elliptic and parabolic
  boundary value problems on {L}ipschitz domains.
\newblock {\em J. Differential Equations}, 251(4-5):860--880, 2011.

\bibitem{PaDe99}
C.~A. Parent and P.~N. Devreotes.
\newblock {{A} cell's sense of direction}.
\newblock {\em Science}, 284(5415):765--770, Apr 1999.

\bibitem{RaEd17}
Wouter-Jan Rappel and Leah Edelstein-Keshet.
\newblock Mechanisms of cell polarization.
\newblock {\em Current opinion in systems biology}, 3:43--53, June 2017.

\bibitem{RaeRoe12}
Andreas R{\"a}tz and Matthias R{\"o}ger.
\newblock Turing instabilities in a mathematical model for signaling networks.
\newblock {\em J. Math. Biol.}, 65(6-7):1215--1244, 2012.

\bibitem{Rodr87}
Jos\'e-Francisco {Rodrigues}.
\newblock {\em {Obstacle problems in mathematical physics.}}, volume 134.
\newblock Elsevier, Amsterdam, 1987.

\bibitem{Rodr02}
Jos\'e-Francisco {Rodrigues}.
\newblock Reaction-diffusion: from systems to nonlocal equations in a class of
  free boundary problems.
\newblock In {\em International conference on reaction-diffusion systems:
  theory and applications. Selected papers of the conference, Kyoto University,
  Kyoto, Japan, February 5--8, 2001.}, volume 1249, pages 72--89. Research
  Institute for Mathematical Sciences, Kyoto University, Kyoto, 2002.

\bibitem{Roub13}
Tom{\'a}{\v s} Roub{\' i}{\v c}ek.
\newblock {\em Nonlinear partial differential equations with applications},
  volume 153 of {\em International Series of Numerical Mathematics}.
\newblock Birkh\"auser/Springer Basel AG, Basel, second edition, 2013.

\bibitem{RaeRoe14}
Andreas Rätz and Matthias Röger.
\newblock Symmetry breaking in a bulk–surface reaction–diffusion model for
  signalling networks.
\newblock {\em Nonlinearity}, 27(8):1805, 2014.

\bibitem{SYEB14}
Monica Skoge, Haicen Yue, Michael Erickstad, Albert Bae, Herbert Levine, Alex
  Groisman, William~F. Loomis, and Wouter-Jan Rappel.
\newblock Cellular memory in eukaryotic chemotaxis.
\newblock {\em Proceedings of the National Academy of Sciences},
  111(40):14448--14453, 2014.

\bibitem{SkLN05}
R~Skupsky, W~Losert, and R~J Nossal.
\newblock Distinguishing modes of eukaryotic gradient sensing.
\newblock {\em Biophysical journal}, 89:2806--2823, October 2005.

\bibitem{Wlok87}
J.~{Wloka}.
\newblock {\em {Partial differential equations. Transl. from the German by C.
  B. and M. J. Thomas.}}
\newblock 1987.

\bibitem{WuYW06}
Zhuoqun Wu, Jingxue Yin, and Chunpeng Wang.
\newblock {\em Elliptic \& parabolic equations}.
\newblock World Scientific Publishing Co. Pte. Ltd., Hackensack, NJ, 2006.

\end{thebibliography}

\end{document}